\newcommand{\ba}{\begin{eqnarray}}
\newcommand{\ea}{\end{eqnarray}}
\newtheorem{thm}{Theorem}[section]
\newtheorem{conjecture}{Conjecture}
\newtheorem{theorem}[thm]{Theorem}
\newtheorem{lemma}[thm]{Lemma}
\newtheorem{corollary}[thm]{Corollary}
\newtheorem{remark}[thm]{Remark}
\newcommand*{\rom}[1]{\expandafter\@slowromancap\romannumeral #1@}
\begin{document}
\title{\textbf{Tournaments and the Erd\"{o}s-Hajnal Conjecture}}
\maketitle


\begin{center}
\author{Soukaina ZAYAT \footnote{Department of Mathematics Faculty of Sciences I, Lebanese University, Beirut - Lebanon.\vspace{1.5mm} (soukaina.zayat.96@outlook.com)}, Salman GHAZAL \footnote{Department of Mathematics Faculty of Sciences I, Lebanese University, Beirut - Lebanon.\\ salman.ghazal@ul.edu.lb}\footnote{Department of Mathematics and Physics, School of Arts and Sciences, Beirut International University, Beirut - Lebanon. \vspace{2mm} salman.ghazal@liu.edu.lb}}
\end{center}

\begin{abstract}
The celebrated Erd\"{o}s-Hajnal conjecture states that for every undirected graph $H$ there exists $ \epsilon(H) > 0 $ such that every undirected graph on $ n $ vertices that does not contain $H$ as an induced subgraph contains a clique or a stable set of size at least $ n^{\epsilon(H)} $. This conjecture has a directed equivalent version stating that for every tournament $H$ there exists $ \epsilon(H) > 0 $ such that every $H$-free $n$-vertex tournament $T$ contains a transitive subtournament of order at least $ n^{\epsilon(H)} $. This conjecture is proved for few infinite families of tournaments. In this paper we construct a new infinite family of tournaments $-$ the family of so-called flotilla-galaxies and we prove the correctness of the conjecture for every flotilla-galaxy tournament. 
\end{abstract}

\section{Introduction}
Let $ G $ be an undirected graph. We denote by $ V(G) $ the set of its vertices and by $ E(G) $ the set of its edges. We call $ \mid$$G$$\mid$ $=$ $ \mid$$V(G)$$\mid$ the \textit{size} of $G$. Let $X \subseteq V(G)$, the \textit{subgraph of} $G$ \textit{induced by} $X$ is denoted by $G$$\mid$$X$, that is the graph with vertex set $X$, in which $x,y \in X$ are adjacent if and only if they are adjacent in $G$. A \textit{clique} in $G$ is a set of pairwise adjacent vertices and a \textit{stable set} in $G$ is a set of pairwise nonadjacent vertices. For an undirected graph $H$, we say that $G$ is $H$-$free$ if no induced subgraph of $G$ is isomorphic to $H$. A \textit{digraph} is a pair $D=(V,E)$ of sets such that $E\subset V \times V$, and such that for every $(x,y)\in E$ we must have $(y,x)\notin E$, in particular if $(x,y)\in E$ then $x \neq y$. $E$ is the arc set and $V$ is the vertex set and they are denoted by $E(D)$ and $V(D)$ respectively. We say that $D'$ is a \textit{subdigraph} of a digraph $D$ if $V(D') \subseteq V(D)$ and $E(D') \subseteq E(D)$. We say that $D$ contains a copy of $D'$ if $D'$ is isomorphic to a subdigraph of $D$. A \textit{tournament} is a directed graph (digraph) such that for every pair $u$ and $v$ of vertices, exactly one of the arcs $(u,v)$ or $(v,u)$ exists. A tournament is \textit{transitive} if it contains no directed cycle. We denote by $T^{c}$ the tournament obtained from the tournament $T$ by reversing the directions of all the arcs of $T$. Let $T$ be a tournament. We denote its vertex set by $V(T)$ and its arc set by $E(T)$, and we write $\mid$$T$$\mid$ for $\mid$$V(T)$$\mid$. We say that $\mid$$T$$\mid$ is the size of $T$. Let $X \subseteq V(T)$, the \textit{subtournament of} $T$ \textit{induced by} $X$ is denoted by $T$$\mid$$X$, that is the tournament with vertex set $X$, such that for $x,y \in X$, $(x,y) \in E(T$$\mid$$ X)$ if and only if $(x,y) \in E(T)$. If $(u,v)\in E(T)$ then we say that $u$ is \textit{adjacent to} $v$ (alternatively: $v$ is an \textit{outneighbor} of $u$) and we write $u\rightarrow v$, also we say that $v$ is \textit{adjacent from} $u$ (alternatively: $u$ is an \textit{inneighbor} of $v$) and we write $v\leftarrow u$. For two sets of vertices $V_{1},V_{2}$ of $T$ we say that $V_{1}$ is \textit{complete to} (resp. \textit{from}) $V_{2}$ if every vertex of $V_{1}$ is adjacent to (resp. from) every vertex of $V_{2}$, and we write $V_{1} \rightarrow V_{2}$ (resp. $V_{1} \leftarrow V_{2}$). We say that a vertex $v$ is complete to (resp. from) a set $V$ if $\lbrace v \rbrace$ is complete to (resp. from) $V$ and we write $v \rightarrow V$ (resp. $v \leftarrow V$).  Given a tournament $H$, we say that $T$ \textit{contains} $H$ if $H$ is isomorphic to $T$$\mid$$X$ for some $X \subseteq V(T)$. If $T$ does not contain $H$, we say that $T$ is $H$-$free$. 
\vspace{1.5mm}\\
Erd\"{o}s and Hajnal proposed the following conjecture \cite{jhp} (EHC):
\begin{conjecture} For any undirected graph $H$ there exists $ \epsilon(H) > 0 $ such that any $ H$-free undirected graph with $n$ vertices contains a clique or a stable set of size at least $ n^{\epsilon(H)}. $
\end{conjecture}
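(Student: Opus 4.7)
Since the statement above is the celebrated open Erd\"{o}s--Hajnal conjecture in its full generality, I can only outline a research program rather than a complete proof. The natural approach is induction on $|V(H)|$, using the substitution lemma of Alon, Pach and Solymosi as the main reduction tool: if $H$ arises from a smaller graph $H_1$ by substituting an Erd\"{o}s--Hajnal graph $H_2$ at a vertex of $H_1$, then $H$ itself satisfies EH, with an exponent that is an explicit function of the exponents of $H_1$ and $H_2$. This reduces the problem to the case where $H$ is \emph{prime}, i.e.\ indecomposable with respect to substitution, so that only this restricted class of obstructions needs direct attention.

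The base cases of the induction are classical. For $|V(H)| \le 4$ everything follows from Ramsey's theorem together with the complement-symmetry of EH (if $H$ satisfies EH then so does its complement, since cliques and stable sets swap). For $|V(H)|=5$ one verifies each prime five-vertex graph by a short case analysis; the six-vertex case, except possibly for a handful of extremal graphs, would be handled similarly. More generally, for a prime $H$, the proposed strategy is to locate a large \emph{useful structure} inside any $H$-free graph $G$ on $n$ vertices: disjoint vertex subsets $A,B$ of comparable polynomial size such that $A$ is either complete or anticomplete to $B$. Given such a pair, recursion inside $A$ and $B$ together with an Erd\"{o}s--Szekeres-type merging step extracts a clique or a stable set of polynomial size, yielding the desired exponent.

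\textbf{Main obstacle.} The decisive difficulty lies precisely in the prime case. There is no known general method, short of structural arguments tailored to a specific $H$, for producing the required complete or anticomplete bipartite pair inside an $H$-free graph. For an arbitrary prime $H$ one loses the ability to leverage smaller pieces of $H$ that already satisfy EH, which is exactly why the conjecture has resisted direct attack since 1989. Consequently, progress has been confined to restricted families where additional structure can be exploited, such as the galaxies of \cite{polll}, the constellations of \cite{kg}, the small tournaments of \cite{bnmm}, and, as the present paper is designed to show, the flotilla-galaxy tournaments; a full resolution would require a qualitatively new idea for handling prime obstructions of arbitrary size.
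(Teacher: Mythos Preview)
The statement you were asked to prove is Conjecture~1, the full Erd\H{o}s--Hajnal conjecture. The paper does \emph{not} prove this statement: it is stated precisely as a conjecture, with no accompanying proof, and serves only as motivating background. The paper's actual results concern the directed reformulation (Conjecture~2) restricted to the special class of flotilla-galaxy tournaments. There is therefore no ``paper's own proof'' of this statement to compare your attempt against.

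You correctly recognised that the conjecture is open and that no complete argument is available; accordingly, what you wrote is an honest research outline rather than a proof. That outline is accurate as far as it goes (substitution reduces to prime $H$; the bottleneck is producing a large complete or anticomplete pair in the prime case), but it does not constitute a proof, and you say so yourself. So the situation is simply: the paper offers no proof of Conjecture~1, and neither do you---which is the expected outcome for a famous open problem.
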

In 2001 Alon et al. proved \cite{fdo} that Conjecture $1$ has an equivalent directed version, as follows:
\begin{conjecture} \label{a} For any tournament $H$ there exists $ \epsilon(H) > 0 $ such that every $ H $-free tournament with $n$ vertices contains a transitive subtournament of size at least $ n^{\epsilon(H)}. $
\end{conjecture}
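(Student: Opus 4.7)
The plan is to proceed by induction on $|H|$, combined with two classical ingredients: the substitution--closure principle of Alon, Pach and Solymosi (the class of tournaments satisfying Conjecture~\ref{a} is closed under the substitution operation), and a vertex-removal argument. The base cases $|H|\leq 2$ are trivial, and the cases $|H|\leq 6$ (with the single exception of $K_{6}$) are settled by the work cited in the excerpt, so I assume inductively that Conjecture~\ref{a} has been established for every tournament of size strictly less than $|H|$.

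The first step is to reduce to the case where $H$ is \emph{prime}, that is, admits no non-trivial homogeneous subset. If $H$ is obtained by substituting a tournament $H_{2}$ into a vertex of a tournament $H_{1}$ with $|H_{1}|, |H_{2}| < |H|$, then both $H_{1}$ and $H_{2}$ satisfy Conjecture~\ref{a} by the induction hypothesis, and the substitution--closure principle yields Conjecture~\ref{a} for $H$ as well, with an explicit $\epsilon(H)$ computed from $\epsilon(H_{1})$ and $\epsilon(H_{2})$. Hence from now on one may and does assume that $H$ is prime.

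For prime $H$, fix a vertex $v \in V(H)$ and set $H' := H\mid(V(H)\setminus\{v\})$. By induction, $H'$ has the Erd\"{o}s--Hajnal property with some exponent $\epsilon(H') > 0$. Given an $H$-free tournament $T$ with $n$ vertices, the aim is to build a partition of $V(T)$ into $\mathrm{poly}(n)$ classes such that each class is, in a suitably strong sense, $H'$-free: any putative copy of $H'$ living inside one class would, by the design of the partition and the $H$-freeness of $T$, extend to a copy of $H$ in $T$, a contradiction. Applying the inductive hypothesis class by class then produces a transitive subtournament of size at least $(n/\mathrm{poly}(n))^{\epsilon(H')}$, and choosing $\epsilon(H)$ slightly smaller than $\epsilon(H')$ absorbs the polynomial loss.

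The main obstacle is precisely the construction of this partition in the prime case. For every family treated so far---galaxies, constellations, all five-vertex tournaments, all six-vertex tournaments except $K_{6}$, and now flotilla-galaxy tournaments---the partition has been tailored to very specific combinatorial features of $H$ (chains of out-/in-stars, backbones, distinguished vertices with prescribed in/out-neighborhoods, etc.). No uniform construction of such a partition is known for arbitrary prime $H$, and producing one would in essence resolve the full conjecture. In the absence of such a uniform construction, the realistic outcome of this plan is to enlarge, family by family, the list of prime tournaments for which an appropriate partition can be exhibited; this is exactly the contribution of the present paper via the flotilla-galaxy class, which in the induction above removes one more obstruction to closing the inductive step.
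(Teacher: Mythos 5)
There is a fundamental problem here: the statement you are asked to prove is Conjecture~\ref{a} itself, the directed form of the Erd\"{o}s--Hajnal conjecture, which is an open problem. The paper does not prove it and never claims to; it only records (via Alon, Pach and Solymosi) that it is equivalent to the undirected conjecture, and then establishes it for the special class of flotilla-galaxy tournaments. Your text is accordingly not a proof but a programme, and you say so yourself: after reducing to prime $H$ by substitution-closure and induction on $|H|$, the entire weight of the argument rests on ``the construction of this partition in the prime case,'' and you explicitly concede that no uniform construction is known and that producing one ``would in essence resolve the full conjecture.'' That concession is exactly the missing step; everything before it is standard bookkeeping, and everything after it (apply the inductive hypothesis class by class, absorb a polynomial loss into the exponent) is conditional on an object you have not constructed. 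So the proposal contains a genuine, self-acknowledged gap at its only nontrivial point, and cannot be compared favourably or unfavourably with a proof in the paper, because the paper contains no proof of this statement.

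Two smaller remarks on the sketch itself. First, the claim that a copy of $H'=H\setminus\{v\}$ inside a class ``would, by the design of the partition and the $H$-freeness of $T$, extend to a copy of $H$'' is precisely the property that fails to be achievable in general; for the families actually handled in the literature (galaxies, constellations, the five- and six-vertex cases, and the flotilla-galaxies of this paper) the arguments do not proceed by deleting a vertex of $H$ and partitioning $V(T)$, but by locating $\widehat{H}$ inside a smooth $(c,\lambda,w)$-structure of an $\epsilon$-critical tournament, building the boats and stars piece by piece. Second, the quantitative step ``choosing $\epsilon(H)$ slightly smaller than $\epsilon(H')$ absorbs the polynomial loss'' needs the number of classes to be $n^{c}$ with $c<1$ (so that some class has polynomial size); as written, ``$\mathrm{poly}(n)$ classes'' does not guarantee this. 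Neither repair matters, however, until the central construction is supplied, and supplying it is equivalent to the conjecture you were asked to prove.
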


A tournament $H$ \textit{satisfies the Erd\"{o}s-Hajnal Conjecture (EHC)} (equivalently: $H$ has the \textit{Erd\"{o}s-Hajnal property}) if there exists $ \epsilon(H) > 0 $ such that every $ H $-free tournament $T$ with $n$ vertices contains a transitive subtournament of size at least $ n^{\epsilon(H)}. $\vspace{2.5mm}\\
The Erd\"{o}s-Hajnal property is a \textit{hereditary property}, i.e if a tournament $H$ has the Erd\"{o}s-Hajnal property then all its subtournaments also have the Erd\"{o}s-Hajnal property.\vspace{3mm}

 The Erd\"{o}s-Hajnal conjecture is known for all tournaments on at most six vertices except one \cite{polll,bnmm}, and for few infinite classes of tournaments \cite{fdo,polll,kg}.\\ A set of vertices $S \subseteq V(H)$ of a tournament $H$ is called \textit{homogeneous} if for every $v \in V(H)$$ \backslash S$ the following holds: either for all $w \in S$ we have: $(w,v)$ is an arc or for all $w \in S$ we have: $(v,w)$ is an arc. A homogeneous set $S$ is called \textit{nontrivial} if $ \mid $$S$$ \mid > 1 $ and $S \neq V(H)$. A tournament is called \textit{prime} if  it does not have nontrivial homogeneous sets, else we call it \textit{not prime}. An outstanding result of Alon et al. \cite{fdo} that is applied to tournaments shows that if the conjecture holds for the tournaments $H$ and $H'$ then it also holds for the tournament obtained by substituting $H'$ into a vertex of $H$.  All tournaments that can be constructed by the substitution procedure have nontrivial homogeneous sets. Then a tournament is prime if and only if it is not obtained from smaller tournaments by substitution. And so the result of \cite{fdo} implies the following theorem that explains why prime tournaments plays a central role:
\begin{theorem}
 If the Erd\"{o}s-Hajnal conjecture is not true then the smallest counterexample is prime.
\end{theorem} 
Let $ \theta = (v_{1},...,v_{n}) $ be an ordering of the vertex set $V(D)$ of an $ n- $vertex digraph $D$.
An arc $ (v_{i},v_{j})\in E(D) $ is a \textit{backward arc of $D$ under} $ \theta $ if $ i > j $. We say that a vertex $ v_{j} $ is \textit{between} two vertices $ v_{i},v_{k} $ under $ \theta = (v_{1},...,v_{n}) $ if $ i < j < k $ or $ k < j < i $. 
 The graph of backward arcs under $ \theta $, denoted by $ B(D,\theta) $, is the undirected graph that has vertex set $V(D)$, and $ v_{i}v_{j} \in E(B(D,\theta)) $ if and only if $ (v_{i},v_{j}) $ or $ (v_{j},v_{i}) $ is a backward arc of $D$ under $ \theta $. A tournament $S$ on $p$ vertices with $V(S)= \lbrace u_{1},u_{2},...,u_{p}\rbrace$ is a \textit{right star} (resp. \textit{left star}) (resp. \textit{middle star}) if there exist an ordering $\theta^{*} = (u_{1},u_{2},...,u_{p})$ of its vertices such that the backward arcs of $S$ under $\theta^{*}$ are $(u_{p},u_{i})$ for $i=1,...,p-1$ (resp. $(u_{i},u_{1})$ for  $i=2,...,p$) (resp. $(u_{i},u_{r})$ for $i= r+1,...,p$ and $(u_{r},u_{i})$ for $i=1,...,r-1$, where $2\leq r\leq p-1$). In this case we write $S = \lbrace u_{1},u_{2},...,u_{p}\rbrace$ and we call $\theta^{*} = (u_{1},u_{2},...,u_{p})$ a \textit{right star ordering} (resp. \textit{left star ordering}) (resp. \textit{middle star ordering}) of $S$, $u_{p}$ (resp. $u_{1}$) (resp. $u_{r}$) the \textit{center of} $S$, and $u_{1},...,u_{p-1}$ (resp. $u_{2},...,u_{p}$) (resp. $u_{1},...,u_{r-1},u_{r+1},...,u_{p}$) the \textit{leaves of} $S$. A \textit{star} is a left star or a right star or a middle star. A \textit{star ordering} is a left star ordering or a right star ordering or a middle star ordering. Note that in the case $p=2$ we may choose arbitrarily any one of the two vertices to be the center of the star, and the other vertex is then considered to be the leaf. A \textit{frontier star} is a left star or a right star (note that a frontier star is not a middle star, a frontier star is either left or right).  
   A \textit{star} $S=\lbrace v_{i_{1}},...,v_{i_{t}}\rbrace$ \textit{of $D$ under $\theta$} (where $i_{1}<...<i_{t}$) is the subdigraph of $D$ induced by $\lbrace v_{i_{1}},...,v_{i_{t}}\rbrace$ such that $S$ is a star and $S$ has the star ordering $ (v_{i_{1}},...,v_{i_{t}})$ under $\theta$ (i.e $(v_{i_{1}},...,v_{i_{t}})$ is the restriction of $\theta$ to $V(S)$ and $ (v_{i_{1}},...,v_{i_{t}})$ is a star ordering of $S$).\vspace{3.5mm}

In \cite{polll} Berger et al. constructed a new infinite family of tournaments (so-called \textit{galaxies}) containing infinitely many prime tournaments that satisfy Conjecture \ref{a}.\\
A tournament $T$ is a \textit{galaxy} if there exists an ordering $\theta$ of its vertices such that $V(T)$ is the disjoint union of $V(Q_{1}),...,V(Q_{l}),X$ where $Q_{1},...,Q_{l}$ are the frontier stars of $T$ under $\theta$, and for every $x\in X$, $\lbrace x \rbrace$ is a singleton component of $B(T,\theta)$, and no center of a star is between leaves of another star under $\theta$. In this case we also say that $T$ \textit{is a \textit{galaxy} under $\theta$}. If $X=\phi$, we say that $T$ is a \textit{regular galaxy under $\theta$} (see Figure \ref{fig:galaxy}). 
\begin{theorem} \cite{polll}
Every galaxy satisfies the Erd\"{o}s-Hajnal conjecture.
\end{theorem}
\begin{figure}[h]
	\centering
	\includegraphics[width=0.45\linewidth]{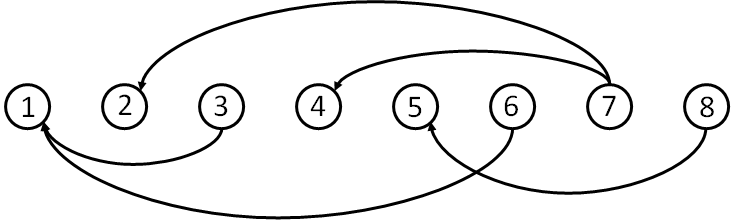}
	\caption{Galaxy under $(1,2,...,8)$ consisting of one left star and two right stars. All the non drawn arcs are forward.}
	\label{fig:galaxy}
\end{figure}
And in $2015$ Choromanski extends the family of galaxies to \textit{constellations} by making the condition concerning the centers of the stars much weaker than that in galaxies, but still middle stars not allowed (constellations are fully characterized in \cite{kg}) and proved the following theorem:
\begin{theorem}\label{constellation}\cite{kg}
Every constellation satisfies the Erd\"{o}s-Hajnal conjecture.
\end{theorem}
More recently we extend the family of galaxies to \textit{galaxies with spiders} \cite{sg} in which we allow middle stars to exist under some conditions and we replace the condition concerning centers of stars by a weaker one:
\begin{theorem}\cite{sg}
Every galaxy with spiders satisfy the Erd\"{o}s-Hajnal conjecture.
\end{theorem}
A tournament $T$ is a \textit{nebula} if there exists an ordering $\theta$ of its vertices such that $V(T)$ is the disjoint union of $V(Q_{1}),...,V(Q_{l}),X$ where $Q_{i}$ is a star of $T$ under $\theta$ ($Q_i$ may be middle star)  for $i=1,...,l$, and for every $x\in X$, $\lbrace x \rbrace$ is a singleton component of $B(T,\theta)$  (note that there is no condition concerning the location of the centers of the stars and middle stars). In this case say that $\theta$ is a \textit{nebula ordering of $T$}.\vspace{1mm}\\

Unfortunately showing that every nebula satisfies the Erd\"{o}s-Hajnal conjecture is still wide open problem and considered very hard. The only known results concerning nebulas are for galaxies, constellations and galaxies with spiders. \\
 On the other hand there exist infinitely many tournaments with no nebula ordering and not known to satisfy $EHC$. That motivates us to work on new configuration of backward arcs. Our first result concerning tournaments with no nebula ordering is for an infinite class of tournaments $-$ the so-called \textit{asterisms} \cite{sg}. To prove $EHC$ for asterisms we introduce a very powerful tool $-$ the so-called \textit{"Corresponding Digraph"} that turns to be very useful in \textit{flotilla-galaxies}, the infinite class treated in this paper. Flotilla-galaxy tournaments have no nebula ordering, instead a flotilla-galaxy has a special backward arc configuration consisting of disjoint group of $4$-vertex paths and stars (note that middle stars on three vertices are allowed). Middle stars and $4$-vertex paths are considered of special interest and very hard to treat. That motivates us to work on backward arc configuration consisting of such structures. \vspace{3mm}\\
 The main result of this paper is the following:
\begin{theorem}\label{generalflotilla-galaxy}
Every flotilla-galaxy satisfies the Erd\"{o}s-Hajnal conjecture.
\end{theorem}
This paper is organized as follows:\vspace{1mm}\\
$\bullet$ In section $2$ we give some properties of $\epsilon$-critical tournaments needed in the proof of the main results in this paper.\\
$\bullet$ In section $3$ we define formally what are flotilla-galaxies, we introduce some definitions and we prove Theorem \ref{generalflotilla-galaxy}. 
   
\section{$\epsilon$-critical tournaments}
Denote by $tr(T)$ the largest size of a transitive subtournament of a tournament $T$. For $X \subseteq V(T)$, write $tr(X)$ for $tr(T$$\mid$$X)$. Let $X, Y \subseteq V(T)$ be disjoint. Denote by $e_{X,Y}$ the number of directed arcs $(x,y)$, where $x \in X$ and $y \in Y$. The directed density from $X$ to $Y$ is defined as $d(X,Y) = \frac{e_{X,Y}}{\mid X \mid.\mid Y \mid} $.\\ We call $T$ $ \epsilon  $-\textit{critical} for $ \epsilon > 0 $ if $tr(T) < $ $ \mid $$T$$ \mid^{\epsilon} $ but for every proper subtournament $S$ of $T$ we have: $tr(S) \geq $ $ \mid $$S$$ \mid^{\epsilon}. $ The following are some properties of $\epsilon$-critical tournaments that we borrow from \cite{polll,bnmm,sss,sg}:

\begin{lemma} \cite{polll} \label{e} For every $N$ $ > 0 $, there exists $ \epsilon(N) > 0 $ such that for every $ 0 < \epsilon < \epsilon(N)$ every $ \epsilon $-critical tournament $T$ satisfies $ \mid $$T$$\mid$ $ \geq N$.
\end{lemma}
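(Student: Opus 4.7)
The plan is to combine the defining inequality of $\epsilon$-criticality with the Erd\H{o}s--Moser bound from Lemma \ref{h}, and observe that these two inequalities are incompatible for small $\epsilon$ unless $|T|$ is large.

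First, let me handle the trivial case. If $|T|=1$, then $tr(T)=1=|T|^{\epsilon}$, which contradicts the strict inequality $tr(T)<|T|^{\epsilon}$ required of an $\epsilon$-critical tournament. So any $\epsilon$-critical $T$ has $|T|\geq 2$, and in particular $tr(T)\geq 2$.

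Next, let $T$ be $\epsilon$-critical with $n:=|T|\geq 2$. By Lemma \ref{h}, choosing the largest $k$ with $2^{k-1}\leq n$, we obtain
\[
tr(T)\;\geq\;\lfloor \log_{2} n\rfloor+1.
\]
Combining with $tr(T)<n^{\epsilon}$ yields
\[
\lfloor \log_{2} n\rfloor+1\;<\;n^{\epsilon},
\]
and taking logarithms gives
\[
\epsilon\;>\;\frac{\log\bigl(\lfloor \log_{2} n\rfloor+1\bigr)}{\log n}\;=:\;f(n).
\]
For every integer $n\geq 2$, the quantity $f(n)$ is a well-defined positive real number (since $\lfloor \log_{2} n\rfloor +1\geq 2$).

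Finally, I would define
\[
\epsilon(N)\;:=\;\min_{2\leq n<N} f(n),
\]
which is positive as a minimum over finitely many positive numbers. Then for any $0<\epsilon<\epsilon(N)$ and any $\epsilon$-critical tournament $T$, the inequality $\epsilon>f(|T|)$ derived above forces $|T|\geq N$, as required.

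There is no serious obstacle: the entire argument is a one-line contrapositive combined with the Erd\H{o}s--Moser bound, and the only subtlety is to dispose of the $n=1$ case so that $f(n)$ is defined on the relevant range. (One could alternatively phrase the bound as $\epsilon(N)=\dfrac{\log 2}{\log N}$ to avoid taking a minimum, since a weak Erd\H{o}s--Moser estimate $tr(T)\geq 2$ suffices to force $n^{\epsilon}>2$, which already gives $n>2^{1/\epsilon}$.)
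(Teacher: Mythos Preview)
Your proof is correct, and in fact your parenthetical remark at the end is exactly the paper's proof: the paper simply observes that every tournament on at least two vertices has $tr(T)\geq 2$, so $2\leq tr(T)<|T|^{\epsilon}$ forces $|T|>2^{1/\epsilon}$, and takes $\epsilon(N)=\log_{N}2$. Your main argument is the same idea carried out with the full Erd\H{o}s--Moser bound of Lemma~\ref{h} rather than just $tr(T)\geq 2$, which yields a sharper but unnecessarily complicated $\epsilon(N)$; the paper's version is the streamlined form you already identified.
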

\begin{proof}
Since every tournament contains a transitive subtournament of size $2$ so it suffices to take $\epsilon(N) = log_{N}(2)$. $\blacksquare$
\end{proof}
\begin{lemma} \cite{polll} \label{f} Let $T$ be an $ \epsilon $-critical tournament with $\mid$$T$$\mid$ $ =n$ and $\epsilon$,$c,f > 0 $ be constants such that $ \epsilon < log_{c}(1 - f)$. Then for every $A \subseteq V(T)$ with $ \mid $$A$$ \mid$ $ \geq cn$ and every transitive subtournament $G$ of $T$ with $ \mid $$G$$ \mid$ $\geq f.tr(T)$ and $V(G) \cap A = \phi$, we have: $A$ is not complete from $V(G)$ and $A$ is not complete to $V(G)$.
\end{lemma}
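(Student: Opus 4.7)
The plan is to argue by contradiction, exploiting $\epsilon$-criticality to produce, from the forbidden complete pattern, a transitive subtournament of $T$ that is too large.

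First I would suppose, for contradiction, that $A$ is complete to $V(G)$ (the symmetric case "complete from" will follow by applying the same argument to $T^c$, or by mirroring the construction). The key observation is that $A$ is necessarily a \emph{proper} subset of $V(T)$: indeed, $|G| \geq f\cdot tr(T) \geq f > 0$ forces $|G|\geq 1$, and since $V(G)\cap A = \emptyset$, we have $A\subsetneq V(T)$. Hence $T\mid A$ is a proper subtournament, and $\epsilon$-criticality of $T$ gives
\[
tr(A) \;\geq\; |A|^{\epsilon} \;\geq\; (cn)^{\epsilon}.
\]

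Next I would pick a transitive subtournament $G'$ of $T\mid A$ with $|G'|\geq (cn)^{\epsilon}$. Because $A$ is complete to $V(G)$, every vertex of $G'$ is adjacent to every vertex of $G$; concatenating the transitive order on $G'$ with the transitive order on $G$ thus produces a transitive subtournament of $T$ on $V(G')\cup V(G)$ of size at least $(cn)^{\epsilon} + f\cdot tr(T)$. Consequently,
\[
tr(T) \;\geq\; (cn)^{\epsilon} + f\cdot tr(T),
\qquad\text{so}\qquad
tr(T)\,(1-f) \;\geq\; (cn)^{\epsilon}.
\]
Combining with $tr(T) < n^{\epsilon}$ (again from $\epsilon$-criticality applied to $T$ itself) yields $n^{\epsilon}(1-f) > c^{\epsilon}n^{\epsilon}$, i.e.\ $1-f > c^{\epsilon}$.

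The final step is to contradict this with the hypothesis $\epsilon < \log_c(1-f)$. Since $|A|\geq cn$ and $A\subsetneq V(T)$, we have $c<1$, so $\ln c <0$; the inequality $\epsilon < \ln(1-f)/\ln c$ therefore reverses upon multiplication by $\ln c$, giving $\epsilon \ln c > \ln(1-f)$, that is $c^{\epsilon} > 1-f$, contradicting the bound above. For the second conclusion, the same argument with the roles of "before" and "after" swapped (placing $G$ first, then $G'$, in the transitive concatenation) handles the case when $A$ is complete from $V(G)$. The only delicate points are keeping track of the direction of the logarithmic inequality once $c<1$ is noted, and verifying that $A$ is proper so that $\epsilon$-criticality can be applied to $T\mid A$; both are straightforward, so I do not anticipate a genuine obstacle.
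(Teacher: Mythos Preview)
The paper does not actually supply a proof of this lemma; it is quoted from \cite{polll} and immediately followed by the next lemma. Your argument is correct and is precisely the standard proof from \cite{polll}: use $\epsilon$-criticality on the proper subtournament $T\mid A$ to extract a transitive piece of size at least $(cn)^{\epsilon}$, append it to $G$ via the assumed complete relation to force $tr(T)(1-f)\geq (cn)^{\epsilon}$, and then contradict $\epsilon<\log_{c}(1-f)$ after noting $c<1$. There is nothing further to compare.
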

\begin{lemma} \cite{polll} \label{v} Let $T$ be an $ \epsilon $-critical tournament with $\mid$$T$$\mid$ $= n$ and $\epsilon$,$c > 0 $ be constants such that $ \epsilon < log_{\frac{c}{2}}(\frac{1}{2}). $ Then for every two disjoint subsets $X, Y \subseteq V(T)$ with $ \mid $$X$$ \mid$ $ \geq cn, \mid $$Y$$ \mid$ $ \geq cn $ there exist an integer $k \geq \frac{cn}{2} $ and vertices $ x_{1},...,x_{k} \in X $ and $ y_{1},...,y_{k} \in Y $ such that $ y_{i} $ is adjacent to $ x_{i} $ for $i = 1,...,k. $
\end{lemma}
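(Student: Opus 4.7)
The plan is to find the desired matching greedily, stopping only when forced, and then use the $\epsilon$-critical property to derive a contradiction from any premature stop.

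\textbf{Step 1 (Greedy construction).} I would build the pairs one at a time. Set $X_0 = X$, $Y_0 = Y$, and as long as there exist $x \in X_i$ and $y \in Y_i$ with $y \to x$, choose such a pair, call it $(x_{i+1}, y_{i+1})$, and set $X_{i+1} = X_i \setminus \{x_{i+1}\}$, $Y_{i+1} = Y_i \setminus \{y_{i+1}\}$. Let $k$ be the number of steps performed before this process stalls. By construction $y_i \to x_i$ for all $i$, so the only thing to prove is $k \geq cn/2$.

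\textbf{Step 2 (Structure at the stopping time).} Suppose, for contradiction, that $k < cn/2$. Write $X' = X_k$ and $Y' = Y_k$. Since at most $k < cn/2$ vertices have been removed from each side, $|X'| > cn/2$ and $|Y'| > cn/2$. Because the process has stalled, no $y \in Y'$ is adjacent to any $x \in X'$, i.e.\ every arc between $X'$ and $Y'$ goes from $X'$ to $Y'$; equivalently $X'$ is complete to $Y'$.

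\textbf{Step 3 (Using $\epsilon$-criticality).} Since $X$ and $Y$ are disjoint and nonempty, $X' \subsetneq V(T)$ and $Y' \subsetneq V(T)$ are proper subsets. The $\epsilon$-critical hypothesis then gives
\[
tr(X') \geq |X'|^{\epsilon} > (cn/2)^{\epsilon}, \qquad tr(Y') \geq |Y'|^{\epsilon} > (cn/2)^{\epsilon}.
\]
Since $X'$ is complete to $Y'$, concatenating a transitive subtournament of $T|X'$ (in topological order) with a transitive subtournament of $T|Y'$ (in topological order) yields a transitive subtournament of $T$, so
\[
tr(T) \geq tr(X') + tr(Y') > 2(cn/2)^{\epsilon}.
\]
On the other hand $tr(T) < n^{\epsilon}$ by $\epsilon$-criticality, whence $2(c/2)^{\epsilon} < 1$, i.e.\ $(c/2)^{\epsilon} < 1/2$.

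\textbf{Step 4 (Contradiction with the hypothesis on $\epsilon$).} The assumption $\epsilon < \log_{c/2}(1/2)$ is exactly $(c/2)^{\epsilon} > 1/2$ (since $c/2 < 1$ makes the logarithm base less than one, so the inequality on $\epsilon$ reverses when exponentiating). This contradicts the bound obtained in Step~3, so we must have $k \geq cn/2$, finishing the proof. The main (and only) subtle point is the direction of the inequality in Step~4; once that is handled, the rest is a standard greedy-plus-criticality argument, and there is no serious obstacle.
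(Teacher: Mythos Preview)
The paper does not supply its own proof of this lemma; it is quoted from \cite{polll} without argument. Your proof is correct and is essentially the standard one: run a greedy matching of backward pairs, and if it stalls before $cn/2$ steps, the leftover sets $X',Y'$ each have size exceeding $cn/2$ with $X'$ complete to $Y'$, so concatenating maximum transitive subtournaments from each side gives $tr(T)\geq tr(X')+tr(Y')>2(cn/2)^{\epsilon}$, contradicting $tr(T)<n^{\epsilon}$ once $(c/2)^{\epsilon}>1/2$. Your handling of the inequality direction in Step~4 is right (the base $c/2$ is below $1$, since disjointness of $X,Y$ forces $c\le 1/2$), and the distinctness of the $x_i$'s and $y_i$'s is built into the greedy construction. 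There is no gap.
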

\begin{lemma}\cite{sg}\label{r}
Let $f_{1},...,f_{m},c,\epsilon > 0$ be constants, where $0 <  f_{1},...,f_{m},c < 1$ and $0 < \epsilon < log_{\frac{c}{2m}}(1-f_{i})$ for $i=1,...,m$. Let $T$ be an $ \epsilon $-critical tournament with $\mid$$T$$\mid$ $ =n$, and   let $S_{1},...,S_{m}$ be m disjoint transitive subtournaments of $T$ with $ \mid $$S_{i}$$ \mid$ $\geq f_{i}.tr(T)$ for $i=1,...,m$. Let $A \subseteq V(T) \backslash (\bigcup_{i=1}^{m} V(S_{i}))$ with $ \mid $$A$$ \mid$ $ \geq cn$.
Then there exist vertices $s_{1},...,s_{m},a$ such that $a\in A$, $s_{i}\in S_{i}$ for $i=1,...,m$, and $\lbrace a \rbrace$ is complete to $\lbrace s_{1},...,s_{m} \rbrace$. Similarly there exist vertices $u_{1},...,u_{m},b$ such that $b\in A$, $u_{i}\in S_{i}$ for $i=1,...,m$, and $\lbrace b \rbrace$ is complete from $\lbrace u_{1},...,u_{m} \rbrace$.
\end{lemma}
\begin{proof}
We will prove only the first statement  because the latter can be proved analogously.
Let $A_{i} \subseteq A$ such that $A_{i}$ is complete from $S_{i}$ for $i = 1,...,m$. Let $1\leq j \leq m$. If $\mid$$A_{j}$$\mid \geq \frac{\mid A \mid}{2m}\geq \frac{c}{2m}n$, then this will contradicts Lemma \ref{f} since $\mid$$S_{j}$$\mid \geq f_{j}tr(T)$ and $\epsilon < log_{\frac{c}{2m}}(1-f_{j})$. Then $\forall i \in \lbrace 1,...,m \rbrace$, $\mid$$A_{i}$$\mid < \frac{\mid A \mid}{2m}$. Let $A^{*} = A\backslash (\bigcup_{i=1}^{m}A_{i})$, then $\mid$$A^{*}$$\mid > \mid $$A$$ \mid-m.\frac{\mid A \mid}{2m} \geq \frac{\mid A \mid}{2}$. Then $A^{*} \neq \phi$. Fix $a\in A^{*}$. So there exist vertices $s_{1},...,s_{m}$ such that $s_{i}\in S_{i}$ for $i=1,...,m$, and $\lbrace a \rbrace$ is complete to $\lbrace s_{1},...,s_{m} \rbrace$.  $\blacksquare$\vspace{3mm}\\ 
\end{proof}
The proof of the following lemma is completely analogous to the proof of Lemma \ref{r}.
\begin{lemma} \label{middle}
Let $f_{1},f_{2},c,\epsilon > 0$ be constants, where $0 <  f_{1},f_{2},c < 1$ and $0 < \epsilon < min\lbrace log_{\frac{c}{4}}(1-f_{1}), log_{\frac{c}{4}}(1-f_{2})\rbrace$. Let $T$ be an $ \epsilon $$-$critical tournament with $\mid$$T$$\mid$ $ =n$, and   let $S_{1},S_{2}$ be two disjoint transitive subtournaments of $T$ with $ \mid $$S_{1}$$ \mid$ $\geq f_{1}.tr(T)$ and $ \mid $$S_{2}$$ \mid$ $\geq f_{2}.tr(T)$. Let $A \subseteq V(T) \backslash (V(S_{1})\cup V(S_{2}))$ with $ \mid $$A$$ \mid$ $ \geq cn$. Then there exist vertices $a,s_{1},s_{2}$ such that $a\in A, s_{1}\in S_{1}, s_{2}\in S_{2}$ and $s_{1} \leftarrow a \leftarrow s_{2}$. 
\end{lemma}
\begin{proof}
Let $A_{1}$ be the set of vertices of $A$ that are complete from $S_{1}$, and let $A_{2}$ be the set of vertices of $A$ that are complete to $S_{2}$. Assume that $\mid$$A_{1}$$\mid$ $\geq \frac{\mid A \mid}{4} \geq \frac{c}{4}n$. Since $\epsilon < log_{\frac{c}{4}}(1-f_{1})$, then Lemma \ref{f} implies that $S_{1}$ is not complete to $A_{1}$, a contradiction. Then $\mid$$A_{1}$$\mid$ $ < \frac{\mid A \mid}{4}$. Similarly we prove that $\mid$$A_{2}$$\mid$ $ < \frac{\mid A \mid}{4}$. Now let $A^{*} = A\backslash (A_{1} \cup A_{2})$, then $\mid$$A^{*}$$\mid$ $> \frac{\mid A \mid}{2}$. Then $A^{*}\neq \phi$. Fix $a\in A^{*}$. So $\exists s_{1}\in S_{1}$ and $\exists s_{2}\in S_{2}$ such that $s_{1} \leftarrow a \leftarrow s_{2}$. $\blacksquare$ 
\end{proof}
\begin{lemma}\cite{sss} \label{s}
Let $f,c,\epsilon > 0$ be constants, where $0 <  f,c < 1$ and $0 < \epsilon < min\lbrace log_{\frac{c}{2}}(1-f), log_{\frac{c}{4}}(\frac{1}{2})\rbrace$. Let $T$ be an $ \epsilon $$-$critical tournament with $\mid$$T$$\mid$ $ =n$, and   let $S_{1},S_{2}$ be two disjoint transitive subtournaments of $T$ with $ \mid $$S_{1}$$ \mid$ $\geq f.tr(T)$ and $ \mid $$S_{2}$$ \mid$ $\geq f.tr(T)$. Let $A_{1},A_{2}$ be two disjoint subsets of $V(T)$ with $ \mid $$A_{1}$$ \mid$ $\geq cn$, $ \mid $$A_{2}$$ \mid$ $\geq cn$, and $A_{1},A_{2} \subseteq V(T) \backslash (V(S_{1})\cup V(S_{2}))$. Then there exist vertices $a,x,s_{1},s_{2}$ such that $a\in A_{1}, x\in A_{2}, s_{1}\in S_{1}, s_{2}\in S_{2}$, $\lbrace a,s_{1}\rbrace \leftarrow x$, and $a\leftarrow s_{2}$. 
\end{lemma}
\begin{proof}
 Let $A_{1}^{*} = \lbrace a \in A_{1}; \exists s \in S_{2}$ and $a \leftarrow s \rbrace$ and let $A_{2}^{*} = \lbrace x \in A_{2}; \exists v \in S_{1}$ and $v \leftarrow x \rbrace$. Then $A_{1}\backslash A_{1}^{*}$ is complete to $S_{2}$ and $A_{2}\backslash A_{2}^{*}$ is complete from $S_{1}$. Now assume that $\mid$$A_{1}^{*}$$\mid$ $< \frac{\mid A_{1} \mid}{2}$, then $\mid$$A_{1}\backslash A_{1}^{*}$$\mid$ $\geq \frac{\mid A_{1} \mid}{2} \geq \frac{c}{2}n$. Since $\mid$$ S_{2}$$\mid$ $\geq f.tr(T)$ and since $\epsilon < log_{\frac{c}{2}}(1-f)$, then Lemma \ref{f} implies that $A_{1}\backslash A_{1}^{*}$ is not complete to $S_{2}$, a contradiction. Then $\mid$$A_{1}^{*}$$\mid$ $\geq \frac{\mid A_{1} \mid}{2} \geq \frac{c}{2}n$. Similarly we prove that $\mid$$A_{2}^{*}$$\mid$ $\geq  \frac{c}{2}n$. Now since $\epsilon < log_{\frac{c}{4}}(\frac{1}{2})$, then Lemma \ref{v} implies that $\exists k \geq \frac{c}{4}n$, $\exists a_{1},...,a_{k} \in A_{1}^{*}$, $\exists x_{1},...,x_{k} \in A_{2}^{*}$, such that $a_{i} \leftarrow x_{i}$ for $i = 1,...,k$.  So $\exists s_{1} \in S_{1}$, $\exists s_{2} \in S_{2}$ such that  $\lbrace a_{1},s_{1}\rbrace \leftarrow x_{1}$, and $a_{1}\leftarrow s_{2}$.  $\blacksquare$ 
\end{proof}
\begin{lemma} \cite{polll} \label{b} Let $A_{1},A_{2}$ be two disjoint sets such that $d(A_{1},A_{2}) \geq 1-\lambda$ and let $0 < \eta_{1},\eta_{2} \leq 1$. Let $\widehat{\lambda} = \frac{\lambda}{\eta_{1}\eta_{2}}$. Let $X \subseteq A_{1}, Y \subseteq A_{2}$ be such that $\mid$$X$$\mid$ $\geq \eta_{1} \mid$$A_{1}$$\mid$ and $\mid$$Y$$\mid$ $\geq \eta_{2} \mid$$A_{2}$$\mid$. Then $d(X,Y) \geq 1-\widehat{\lambda}$. 
\end{lemma}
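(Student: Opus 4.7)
The plan is to prove the density transfer inequality by counting the ``bad'' arcs (i.e.\ arcs going the wrong way) and observing that every bad arc between $X$ and $Y$ is also a bad arc between $A_{1}$ and $A_{2}$. First I would unpack the definition: since $A_{1}$ and $A_{2}$ are disjoint and we are working in a tournament, for each pair $(u,v)\in A_{1}\times A_{2}$ exactly one of the arcs $(u,v)$, $(v,u)$ is present, so $e_{A_{1},A_{2}}+e_{A_{2},A_{1}}=|A_{1}||A_{2}|$. Rewriting the hypothesis $d(A_{1},A_{2})\geq 1-\lambda$ in the equivalent form
\[
e_{A_{2},A_{1}} \;=\; \bigl(1-d(A_{1},A_{2})\bigr)|A_{1}||A_{2}| \;\leq\; \lambda|A_{1}||A_{2}|
\]
will be the starting point.

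Next, since $X\subseteq A_{1}$ and $Y\subseteq A_{2}$, every arc counted in $e_{Y,X}$ is also counted in $e_{A_{2},A_{1}}$, so the monotonicity $e_{Y,X}\leq e_{A_{2},A_{1}}\leq \lambda|A_{1}||A_{2}|$ follows immediately. Then I would divide by $|X||Y|$ and apply the size hypotheses $|X|\geq \eta_{1}|A_{1}|$ and $|Y|\geq \eta_{2}|A_{2}|$ to obtain
\[
1-d(X,Y)\;=\;\frac{e_{Y,X}}{|X||Y|}\;\leq\;\frac{\lambda|A_{1}||A_{2}|}{\eta_{1}|A_{1}|\cdot\eta_{2}|A_{2}|}\;=\;\frac{\lambda}{\eta_{1}\eta_{2}}\;=\;\widehat{\lambda},
\]
which rearranges to the desired bound $d(X,Y)\geq 1-\widehat{\lambda}$.

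There is really no hard step here; the whole argument is a one-line double counting. The only point that requires any care is remembering that ``low density from $A_{1}$ to $A_{2}$ being close to $1$'' is the same as ``few reverse arcs from $A_{2}$ to $A_{1}$'', and that reverse arcs are monotone under taking subsets. Consequently, a short direct computation along the lines above suffices, and no appeal to the earlier lemmas (\ref{h}, \ref{e}, \ref{f}, \ref{v}) is needed.
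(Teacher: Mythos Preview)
Your argument is correct: the bound follows immediately from the monotonicity of the reverse-arc count $e_{Y,X}\le e_{A_{2},A_{1}}\le\lambda|A_{1}||A_{2}|$ together with the size lower bounds $|X|\ge\eta_{1}|A_{1}|$ and $|Y|\ge\eta_{2}|A_{2}|$. The paper does not supply its own proof of this lemma (it is quoted from \cite{polll}), and the standard one-line double count you wrote is precisely the intended justification.
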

The following is introduced in \cite{bnmm}.\\
Let $ c > 0, 0 < \lambda < 1 $ be constants, and let $w$ be a $ \lbrace 0,1 \rbrace - $ vector of length $ \mid $$w$$ \mid $. Let $T$ be a tournament with $ \mid $$T$$ \mid$ $ = n. $ A sequence of disjoint subsets $ \chi = (S_{1}, S_{2},..., S_{\mid w \mid}) $ of $V(T)$ is a smooth $ (c,\lambda, w)- $structure if:\\
$\bullet$ whenever $ w_{i} = 0 $ we have $ \mid $$S_{i}$$ \mid$ $ \geq cn $ (we say that $ S_{i} $ is a \textit{linear set}).\\
$\bullet$ whenever $ w_{i} = 1 $ the tournament $T$$\mid$$ S_{i} $ is transitive and $ \mid $$S_{i}$$ \mid$ $ \geq c.tr(T) $ (we say that $ S_{i} $ is a \textit{transitive set}).\\
$\bullet$ $ d(\lbrace v \rbrace, S_{j}) \geq 1 - \lambda $ for $v \in S_{i} $ and $ d(S_{i}, \lbrace v \rbrace) \geq 1 - \lambda $ for $v \in S_{j}, i < j $ (we say that $\chi$ is \textit{smooth}).
\begin{theorem} \cite{bnmm} \label{i}
Let $S$ be a tournament, let $w$ be a $ \lbrace 0,1 \rbrace - $vector, and let $ 0 < \lambda_{0} < \frac{1}{2} $ be a constant. Then there exist $ \epsilon_{0}, c_{0} > 0 $ such that for every $ 0 < \epsilon < \epsilon_{0} $, every $ S- $free $ \epsilon $$- $critical tournament contains a smooth $ (c_{0}, \lambda_{0},w)- $structure.
\end{theorem}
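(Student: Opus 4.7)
The approach is induction on the length $|w|$ of the target vector, building the smooth structure one coordinate at a time and budgeting the parameters $c$ and $\lambda$ geometrically so that they still satisfy the required thresholds at the top of the recursion. The base case $|w|=1$ is immediate: take $S_1=V(T)$ if $w=(0)$ and take $S_1$ to be a maximum transitive subtournament if $w=(1)$; the smoothness condition on pairs is vacuous, so any $c_0\le 1$ and any $\lambda_0>0$ work.

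For the inductive step, assume the result for vectors of length $k-1$ and write $w=(w_1,\ldots,w_k)$ with prefix $w'=(w_1,\ldots,w_{k-1})$. I would apply the inductive hypothesis with a strictly smaller target density $\lambda_1<\lambda_0$ and constants $(\epsilon_1,c_1)$ to extract a smooth $(c_1,\lambda_1,w')$-structure $(S_1,\ldots,S_{k-1})$ in $T$. After trimming each linear $S_i$ to size exactly $c_1 n$ (legal by Lemma \ref{b}, which controls the density loss) and noting that each transitive $S_i$ has size at most $tr(T)<n^\epsilon$, the residual set $R=V(T)\setminus\bigcup_{i<k}S_i$ still has linear size provided $c_1$ is chosen small enough compared with $1/k$. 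For each $i<k$ I flag $v\in R$ as \emph{bad at level $i$} when $d(S_i,\{v\})<1-\lambda_0$; an averaging estimate, combined with Lemma \ref{v} or Lemma \ref{f} to rule out concentrated deviations forbidden by the $\epsilon$-criticality of $T$, shows that the bad set at each level occupies only a small fraction of $R$. Intersecting the good sets across $i<k$ yields a subset $G\subseteq R$ of linear size whose every vertex satisfies the required forward density with every $S_i$. If $w_k=0$ I put $S_k=G$; if $w_k=1$ I apply Lemma \ref{h} to $T|G$ to extract a transitive subtournament of size $\log_2|G|$, which is at least $c_0\cdot tr(T)$ by $\epsilon$-criticality and a matching choice of $c_0$.

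The main obstacle is the asymmetry of the smoothness condition: the selection above guarantees $d(S_i,\{v\})\ge 1-\lambda_0$ for every $v\in S_k$, but we also need the dual $d(\{v\},S_k)\ge 1-\lambda_0$ for every $v\in S_i$. To handle this I would run a symmetric cleaning pass, removing from each $S_i$ the small set of vertices with insufficient density toward $S_k$; a double application of Lemma \ref{b} certifies both that only a tiny proportion of $S_i$ is discarded and that all the pairwise densities in the resulting structure survive the trimming. The delicate point, and the source of most of the parameter bookkeeping, is to synchronize the dependencies among $\lambda_0,\lambda_1,c_0,c_1,\epsilon$ across the $k-1$ cleaning steps so that every invocation of Lemma \ref{b} still has its hypotheses satisfied; this is absorbed by making the $\lambda_j$ decay geometrically with depth, so the accumulated loss stays comfortably below $\lambda_0$ at the outer call.
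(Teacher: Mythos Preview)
The paper does not prove this theorem; it is quoted from \cite{bnmm} and used as a black box (see the citation attached to the statement and the immediate passage to Corollary~\ref{c}). So there is no in-paper proof to compare against. That said, your proposed argument has a genuine gap that would prevent it from going through as written.

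The central problem is the step where you pick $S_k$ from the residual $R=V(T)\setminus\bigcup_{i<k}S_i$ and then argue that only a small fraction of $R$ is ``bad at level $i$'', i.e.\ has $d(S_i,\{v\})<1-\lambda_0$. The inductive hypothesis gives you \emph{no} information whatsoever about densities between the $S_i$ and $R$; it only controls densities among the $S_i$ themselves. Your appeal to ``an averaging estimate, combined with Lemma~\ref{v} or Lemma~\ref{f}'' does not supply this: Lemma~\ref{v} produces backward arcs between two linear sets (so it bounds $d(X,Y)$ \emph{away from} $1$, the wrong direction), and Lemma~\ref{f} only rules out a linear set being complete to or from a transitive set. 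Neither yields that most $v\in R$ satisfy $d(S_i,\{v\})\ge 1-\lambda_0$. In the actual proofs of such structure theorems (in \cite{polll,bnmm}) the high pairwise density is manufactured at the outset by a ``pure pair''/density-increment argument internal to $\epsilon$-critical tournaments --- one shows that any linear set can be split into two linear subsets with density $\ge 1-\lambda$ between them, and then iterates --- rather than by hoping a leftover set happens to be well-oriented. Your induction is missing exactly this mechanism.

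A secondary issue: when $w_k=1$ you invoke Lemma~\ref{h} on $T\mid G$ to get a transitive subtournament of size $\log_2|G|$ and claim this is $\ge c_0\,tr(T)$. But $tr(T)$ can be as large as $n^{\epsilon}$, which for any fixed $\epsilon>0$ eventually dominates $\log_2(cn)$, so this fails. The correct move here is to use $\epsilon$-criticality directly: since $G\subsetneq V(T)$ is linear, $tr(T\mid G)\ge |G|^{\epsilon}\ge (c'n)^{\epsilon}>(c')^{\epsilon}tr(T)$, and $(c')^{\epsilon}$ is bounded below once $\epsilon$ is small. This is easily repaired, but the first gap is structural.
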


Let $(S_{1},...,S_{\mid w \mid})$ be a smooth $(c,\lambda ,w)$$-$structure of a tournament $T$, let $i \in \lbrace 1,...,\mid$$w$$\mid \rbrace$, and let $v \in S_{i}$. For $j\in \lbrace 1,2,...,\mid$$w$$\mid \rbrace \backslash \lbrace i \rbrace$, denote by $S_{j,v}$ the set of the vertices of $S_{j}$ adjacent from $v$ for $j > i$ and adjacent to $v$ for $j<i$.
\begin{lemma}\cite{sss} \label{g} Let $0<\lambda<1$, $0<\gamma \leq 1$ be constants and let $w$ be a $\lbrace 0,1 \rbrace$$-$vector. Let $(S_{1},...,S_{\mid w \mid})$ be a smooth $(c,\lambda ,w)$$-$structure of a tournament $T$ for some $c>0$. Let $j\in \lbrace 1,...,\mid$$w$$\mid \rbrace$. Let $S_{j}^{*}\subseteq S_{j}$ such that $\mid$$S_{j}^{*}$$\mid$ $\geq \gamma \mid$$S_{j}$$\mid$ and let $A= \lbrace x_{1},...,x_{k} \rbrace \subseteq \displaystyle{\bigcup_{i\neq j}S_{i}}$ for some positive integer $k$. Then $\mid$$\displaystyle{\bigcap_{x\in A}S^{*}_{j,x}}$$\mid$ $\geq (1-k\frac{\lambda}{\gamma})\mid$$S_{j}^{*}$$\mid$. In particular $\mid$$\displaystyle{\bigcap_{x\in A}S_{j,x}}$$\mid$ $\geq (1-k\lambda)\mid$$S_{j}$$\mid$.
\end{lemma}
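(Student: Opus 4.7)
The plan is to combine the smoothness condition with a one-step union bound. First I would fix $x \in A$, say $x \in S_i$ with $i \neq j$, and show that the ``bad'' set $S_j \setminus S_{j,x}$ has size at most $\lambda|S_j|$. This splits into two symmetric cases. If $i < j$, then by the third bullet in the definition of a smooth $(c,\lambda,w)$-structure, $d(\{x\}, S_j) \geq 1-\lambda$, so at least $(1-\lambda)|S_j|$ vertices of $S_j$ are adjacent from $x$; by definition these are exactly the vertices of $S_{j,x}$. If $i > j$, then applying the same bullet with the roles of the two indices swapped gives $d(S_j, \{x\}) \geq 1-\lambda$, so at least $(1-\lambda)|S_j|$ vertices of $S_j$ are adjacent to $x$, and again these make up $S_{j,x}$.

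Next I would transfer this bound from $S_j$ to the subset $S_j^*$. Interpreting $S^*_{j,x}$ as the analogous set inside $S_j^*$, that is $S^*_{j,x} = S_j^* \cap S_{j,x}$, we have $S_j^* \setminus S^*_{j,x} \subseteq S_j \setminus S_{j,x}$, and therefore
\[
|S_j^* \setminus S^*_{j,x}| \;\leq\; \lambda |S_j| \;\leq\; \frac{\lambda}{\gamma} |S_j^*|,
\]
where the last inequality uses the hypothesis $|S_j^*| \geq \gamma|S_j|$.

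Finally, I would apply the union bound over the $k$ vertices of $A$ to the $k$ bad sets $S_j^* \setminus S^*_{j,x_i}$:
\[
\Bigl| S_j^* \setminus \bigcap_{x \in A} S^*_{j,x} \Bigr|
= \Bigl| \bigcup_{x \in A} (S_j^* \setminus S^*_{j,x}) \Bigr|
\leq \sum_{x \in A} |S_j^* \setminus S^*_{j,x}|
\leq k\,\frac{\lambda}{\gamma}\,|S_j^*|.
\]
Subtracting from $|S_j^*|$ yields $\bigl|\bigcap_{x\in A} S^*_{j,x}\bigr| \geq (1 - k\lambda/\gamma)|S_j^*|$, which is the first inequality. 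The ``in particular'' statement is then an immediate specialization obtained by setting $S_j^* = S_j$ and $\gamma = 1$.

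I do not expect any serious obstacle here; the proof is essentially bookkeeping. The only small subtlety is to make the two-case analysis for $i<j$ versus $i>j$ explicit, since the definition of $S_{j,v}$ switches between ``adjacent from'' and ``adjacent to'' depending on the relative order of the indices, and to confirm that the intended meaning of $S^*_{j,x}$ is the natural restriction of $S_{j,x}$ to $S_j^*$.
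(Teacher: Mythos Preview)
Your proof is correct and follows essentially the same approach as the paper: bound the ``bad'' set for each $x\in A$ by $\frac{\lambda}{\gamma}|S_j^*|$ and combine. The only cosmetic differences are that the paper phrases the combination as an induction on $k$ rather than a one-shot union bound, and obtains the single-vertex bound by invoking Lemma~\ref{b} on $S_j^*$ directly instead of bounding inside $S_j$ first and then restricting; neither difference is substantive.
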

\begin{proof}
The proof is by induction on $k$. without loss of generality assume that $x_{1} \in S_{i}$ and $j<i$. Since $\mid$$S_{j}^{*}$$\mid$ $\geq \gamma \mid$$S_{j}$$\mid$ then by Lemma \ref{b}, $d(S^{*}_{j},\lbrace x_{1}\rbrace) \geq 1-\frac{\lambda}{\gamma}$. So $1-\frac{\lambda}{\gamma} \leq d(S^{*}_{j},\lbrace x_{1}\rbrace) = \frac{\mid S^{*}_{j,x_{1}}\mid}{\mid S_{j}^{*}\mid}$. Then $\mid$$S^{*}_{j,x_{1}}$$\mid$ $\geq (1-\frac{\lambda}{\gamma})$$\mid$$S_{j}^{*}$$\mid$ and so true for $k=1$.\\
Suppose the statement is true for $k-1$. $\mid$$\displaystyle{\bigcap_{x\in A}S^{*}_{j,x}}$$\mid$ $=\mid$$(\displaystyle{\bigcap_{x\in A\backslash \lbrace x_{1}\rbrace}S^{*}_{j,x}})\cap S^{*}_{j,x_{1}}$$\mid$ $= \mid$$\displaystyle{\bigcap_{x\in A\backslash \lbrace x_{1}\rbrace}S^{*}_{j,x}}$$\mid$ $+$ $\mid$$S^{*}_{j,x_{1}}$$\mid$ $- \mid$$(\displaystyle{\bigcap_{x\in A\backslash \lbrace x_{1}\rbrace}S^{*}_{j,x}})\cup S^{*}_{j,x_{1}}$$\mid$ $\geq (1-(k-1)\frac{\lambda}{\gamma})\mid$$S_{j}^{*}$$\mid$ $+$ $(1-\frac{\lambda}{\gamma})\mid$$S_{j}^{*}$$\mid$ $-$ $\mid$$S_{j}^{*}$$\mid$ $= (1-k\frac{\lambda}{\gamma})\mid$$S_{j}^{*}$$\mid$. $\blacksquare$       
\end{proof} 
  
\section{EHC for flotilla-galaxies}

Our paper addresses the problem of middle stars and substructures called \textit{boats}, and prove the conjecture for infinitely many tournaments having boats and middle stars on three vertices under some conditions that we will explain in this section. A \textit{boat} $B = \lbrace x,u,v,y \rbrace$ is a $4$-vertex tournament with $V(B) = \lbrace x,u,v,y \rbrace$ and $E(B) = \lbrace (y,x),(y,u),(v,x),(x$ $,u),(u,v),(v,y) \rbrace$.     \\
In order to define formally the infinite family of flotilla-galaxies we need to define four special tournaments on $7$ vertices obtained from a boat $B =\{1,2,3,4\}$. These tournaments are called \textit{generalized boats}. \\
The \textit{left $\gamma_{1}$-boat} is the tournament obtained from $B$ by adding three extra vertices $5$,$6$ and $7$ and making $5$ adjacent to $\lbrace 3,4,7\rbrace$, $6$ adjacent to $\lbrace 4,5\rbrace$, and $7$ adjacent to $6$. The \textit{right $\gamma_{1}$-boat} is the tournament obtained from $B$ by adding three extra vertices $5$,$6$ and $7$ and making $5$ adjacent from $\lbrace 1,2,7\rbrace$, $6$ adjacent from $\lbrace 1,5\rbrace$, and $7$ adjacent from $6$. The \textit{left $\gamma_{2}$-boat} is the tournament obtained from the left $\gamma_{1}$-boat by reversing the direction of the arc $(4,7)$. The \textit{right $\gamma_{2}$-boat} is the tournament obtained from the right $\gamma_{1}$-boat by reversing the direction of the arc $(7,1)$  (see Figure \ref{fig:gammaboats}).\\
In Figure \ref{fig:gammaboats} we define some crucial orderings of the generalized boats.
 \begin{figure}[h]
	\centering
	\includegraphics[width=1\linewidth]{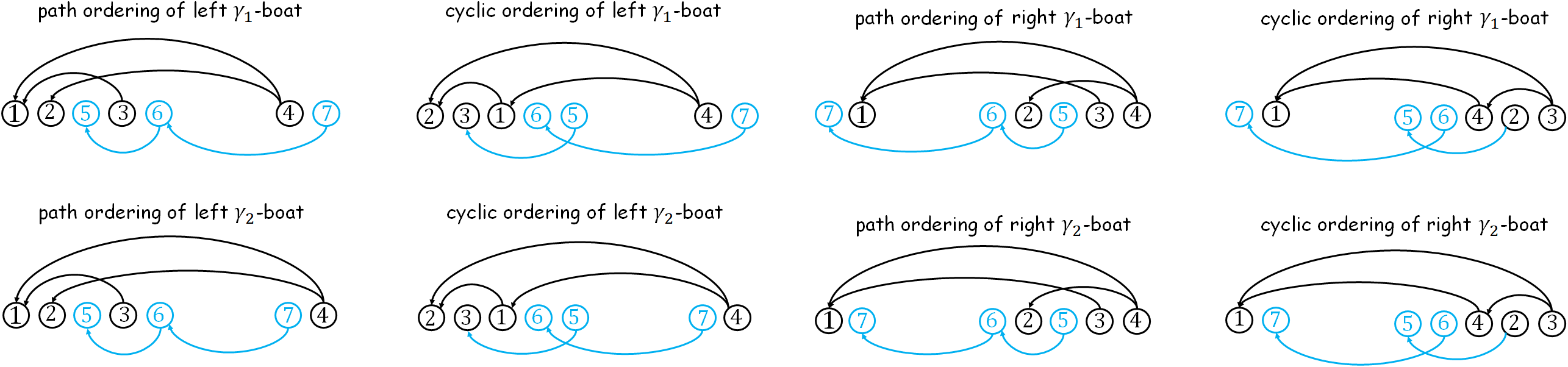}
	\caption{Crucial orderings of the vertices of left and right $\gamma_{1}$-boat, left and right $\gamma_{2}$-boat. All the nondrawn arcs are forward.}
	\label{fig:gammaboats}
\end{figure} 
\\Let $ \theta = (v_{1},...,v_{n}) $ be an ordering of the vertex set $V(T)$ of an $ n$-vertex tournament $T$. A \textit{left $\gamma_{1}$-boat} (resp. \textit{left $\gamma_{2}$-boat}) (resp. \textit{right $\gamma_{1}$-boat}) (resp. \textit{right $\gamma_{2}$-boat})  $B^{\gamma} = \lbrace v_{i_{1}},v_{i_{2}},v_{i_{3}},v_{i_{4}},v_{i_{5}},v_{i_{6}},v_{i_{7}} \rbrace$ \textit{of $T$ under} $\theta$ is an induced subtournament of $T$ with vertex set $\lbrace v_{i_{1}},v_{i_{2}},v_{i_{3}},v_{i_{4}},v_{i_{5}},v_{i_{6}},v_{i_{7}} \rbrace$, such that $B^{\gamma}$ is a left $\gamma_{1}$-boat (resp. left $\gamma_{2}$-boat) (resp. right $\gamma_{1}$-boat) (resp. right $\gamma_{2}$-boat)  and has its path ordering $(v_{i_{1}},v_{i_{2}},v_{i_{3}},v_{i_{4}},v_{i_{5}},v_{i_{6}},v_{i_{7}})$ under $\theta$, and $v_{i_{1}},...,v_{i_{5}}$ are consecutive under $\theta$ and $v_{i_{6}},v_{i_{7}}$ are consecutive under $\theta$ (resp. $v_{i_{1}},...,v_{i_{5}}$ are consecutive under $\theta$ and $v_{i_{6}},v_{i_{7}}$ are consecutive under $\theta$) (rep. $v_{i_{1}},v_{i_{2}}$ are consecutive under $\theta$ and $v_{i_{3}},...,v_{i_{7}}$ are consecutive under $\theta$) (rep. $v_{i_{1}},v_{i_{2}}$ are consecutive under $\theta$ and $v_{i_{3}},...,v_{i_{7}}$ are consecutive under $\theta$).  A \textit{$\gamma$-boat of $T$ under} $\theta$ is a left $\gamma_{1}$-boat or right $\gamma_{1}$$-$boat or left $\gamma_{2}$-boat or right $\gamma_{2}$-boat of $T$ under $\theta$.\vspace{3mm}\\ 
We are ready to define formally the infinite family of flotilla-galaxies.\vspace{2mm}\\
A tournament $T$ is a \textit{flotilla-galaxy} if there exists an ordering $ \theta $ of its vertices such that $V(T)$ is the disjoint union of $V(B^{\gamma}_{1}),...,V(B^{\gamma}_{l}),X$ where $B^{\gamma}_{1},...,B^{\gamma}_{l}$ are the $\gamma$-boats of $T$ under $\theta$, $T$$\mid$$X$ is a galaxy under $\overline{\theta}$ ($\overline{\theta}$ is the restriction of $\theta$ to $X$), and no vertex of a $\gamma$-boat appears in the ordering $\theta$ between leaves of a star of $T$$\mid$$X$ under $\overline{\theta}$. We also say that $T$ \textit{is a flotilla-galaxy under $\theta$}  (see Figure \ref{fig:generalflotillagalaxy}). If for every $x\in X$, $\lbrace x \rbrace$ is a singleton component of $B(T,\theta)$ and the number of the frontier stars of $T$$\mid$$X$ under $\overline{\theta}$ is $l$, then $T$ is called a \textit{regular flotilla-galaxy under $\theta$}.  
\begin{figure}[h]
	\centering
	\includegraphics[width=0.4\linewidth]{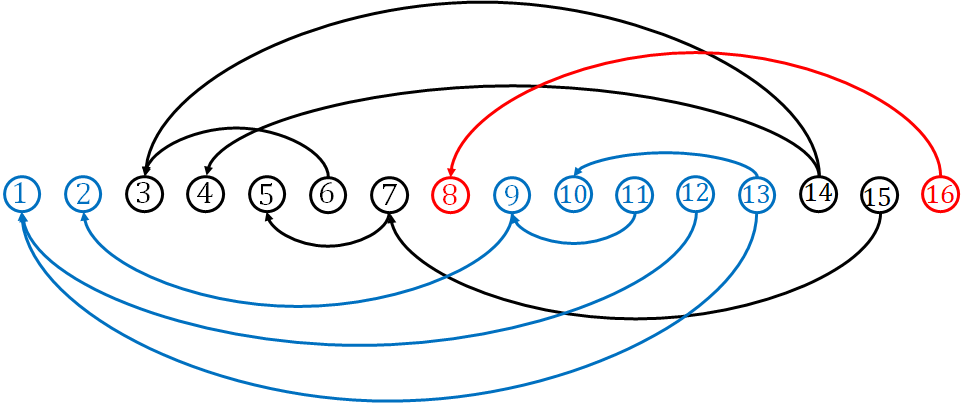}
	\caption{Flotilla-galaxy under $\theta = (1,...,16)$ consisting of one left $\gamma_{1}$-boat, one right $\gamma_{2}$-boat, and one right star. All the nondrawn arcs are forward.}
	\label{fig:generalflotillagalaxy}
\end{figure}
\begin{remark} The main difficulty that is faced in middle stars is looking for its leaves in different transitive sets in a smooth $(c,\lambda ,w)$-structure, which generates doubts about finding middle stars due to having many arcs with unknown orientations.  So we conclude that middle stars are considered very hard.
\end{remark} 
In the following subsection we introduce some tools and technical definitions used to prove EHC for flotilla-galaxies.
\subsection{Definitions and tools}
\subsubsection{Crucial orderings of a generalized flotilla-galaxy}\label{cog}
Let $D$ be a tournament with $7$-vertices $v_{1},...,v_{7}$ and let $\theta_{1}=(v_{1},v_{2},v_{3},v_{4},v_{5},v_{6},v_{7})$ be an ordering of $V(D)$. Let \textit{operation $1$} be the permutation of the vertices $v_{1},...,v_{7}$ that converts the ordering $\theta_{1}$ to the ordering $\theta_{2}=(v_{2},v_{4},v_{1},v_{5},v_{3},v_{6},v_{7})$ of $V(D)$, and let \textit{operation $2$} be the permutation of the vertices $v_{1},...,v_{7}$ that converts the ordering $\theta_{1}$ to the ordering to the ordering $\theta_{3}=(v_{1},v_{2},v_{5},v_{3},v_{7},v_{4},v_{6})$ of $V(D)$.
Let $H$ be a regular flotilla-galaxy under an ordering $\theta = (v_{1},...,v_{h})$ of its vertices with $\mid$$H$$\mid$ $= h$. Let $B^{\gamma}_{1},...,B^{\gamma}_{l}$ be the $\gamma$-boats of $H$ under $\theta$.
Define $\Theta_{\theta}(H) = \lbrace \theta^{'}$ an ordering of $V(H)$; $\theta^{'}$ is obtained from $\theta$ by performing operation $1$ to the vertex set of some left $\gamma_{1}$-boats and some left $\gamma_{2}$-boats of $H$ under $\theta$, and operation $2$ to the vertex set of some right $\gamma_{1}$-boats and some right $\gamma_{2}$-boats of $H$ under $\theta$$\rbrace$. Notice that $\mid$$\Theta_{\theta}(H)$$\mid$ $= 2^{l}$. Also notice that when applying such operations to $\gamma$-boats, the $4$-vertex path and the $3$-vertex middle star will be transformed to a triangle and two $2$-vertex stars, which is very interesting (see Figure \ref{fig:gammaboats}).
\subsubsection{Corresponding digraph}
Unlike galaxies and constellations, in flotilla-galaxies, the flotilla-galaxy ordering alone failed to make the proof for flotilla-galaxies work. To this end we started thinking about another crucial orderings that give different backward arc configurations like triangles and stars. The corresponding digraph is a tool we introduce in \cite{sg} for which we construct starting from a tournament $H$ a new larger digraph following all backward arc configurations of $H$ under different crucial orderings of its vertex set. So that whatever the outcomes and all possible cases that we have to study in the proof, we will be able to extract $H$ from an $\epsilon$-critical tournament $T$ using its corresponding digraph. Note that there is not specific rule one can follow to construct the corresponding digraph for any tournament. For example the way we construct the corresponding digraph for asterisms is different than that in flotilla-galaxies, but both follow the same principle described    above.

 Let $H$ be a regular flotilla-galaxy under an ordering $\theta = (v_{1},...,v_{h})$ of its vertices with $\mid$$H$$\mid$ $= h$. Let $B^{\gamma}_{1},...,B^{\gamma}_{l}$ be the $\gamma$-boats of $H$ under $\theta$. In what follows we explain how we construct the \textit{corresponding digraph of $H$ under $\theta$}. We call this digraph the \textit{helping digraph} (or \textit{key digraph}) due to its impact on the proof of our result.\vspace{2.5mm}\\
$\ast$ Let $ 1 \leq i \leq l$ such that $B^{\gamma}_{i} = \lbrace v_{s_{i}},v_{s_{i}+1},v_{s_{i}+2},v_{s_{i}+3},v_{s_{i}+4},v_{q_{i}},v_{q_{i}+1} \rbrace$ is a left $\gamma_{1}$-boat of $H$ under $\theta$. Let $\widehat{B^{\gamma}_{i}}$ be the $13$-vertex digraph  that is obtained from $B^{\gamma}_{i}$ by deleting the arcs
$(v_{s_{i}+2},v_{q_{i}+1}),(v_{s_{i}+3},v_{q_{i}}),(v_{s_{i}},v_{s_{i}+1})$ and adding six extra vertices $x_{i}$ just after $v_{s_{i}}$, $r_{i}$ just after  $v_{s_{i}+4}$, $g_{i}$ just after $r_{i}$, $m_{i}$ just after $g_{i}$, $w_{i}$ just after $v_{s_{i}+3}$, and $y_{i}$ just after $v_{q_{i}+1}$, such that $r_{i}\leftarrow y_{i}, w_{i}\leftarrow m_{i},x_{i}\leftarrow g_{i}, V(\widehat{B^{\gamma}_{i}})\backslash\lbrace r_{i}\rbrace\rightarrow y_{i} , v_{s_{i}}\rightarrow x_{i}\rightarrow V(\widehat{B^{\gamma}_{i}})\backslash\lbrace v_{s_{i}},g_{i}\rbrace , \lbrace v_{s_{i}},v_{s_{i}+1},v_{s_{i}+2},v_{s_{i}+3}\rbrace\rightarrow w_{i}\rightarrow \lbrace v_{s_{i}+4},r_{i},g_{i},v_{q_{i}},v_{q_{i}+1}\rbrace , \lbrace v_{s_{i}},v_{s_{i}+1},v_{s_{i}+2},v_{s_{i}+3}, v_{s_{i}+4}\rbrace \rightarrow \lbrace r_{i},g_{i},m_{i}\rbrace \rightarrow \lbrace v_{q_{i}},v_{q_{i}+1}\rbrace , r_{i}\rightarrow \lbrace g_{i},m_{i}\rbrace$, and $g_{i}\rightarrow m_{i}$.
  We write $\widehat{B^{\gamma}_{i}} = \lbrace v_{s_{i}},x_{i},v_{s_{i}+1},v_{s_{i}+2},v_{s_{i}+3},w_{i},v_{s_{i}+4},$ $r_{i},g_{i},m_{i},v_{q_{i}},v_{q_{i}+1},y_{i} \rbrace$ and we call $(v_{s_{i}},x_{i},v_{s_{i}+1},v_{s_{i}+2},v_{s_{i}+3},w_{i},v_{s_{i}+4},r_{i},g_{i},m_{i},v_{q_{i}},v_{q_{i}+1},y_{i})$ the \textit{forest ordering of $\widehat{B^{\gamma}_{i}}$} and $x_{i},v_{s_{i}+1},v_{s_{i}+2},v_{s_{i}+3},w_{i},r_{i},v_{q_{i}+1}$ the \textit{leaves of} $\widehat{B^{\gamma}_{i}}$. We say that $\widehat{B^{\gamma}_{i}}$ is the \textit{mutant left $\gamma_{1}$-boat} (also we say that $\widehat{B^{\gamma}_{i}}$ is the \textit{digraph corresponding to} $B^{\gamma}_{i}$ \textit{under} $\theta$) (see Figure \ref{fig:boatdigraphs}). \vspace{1.5mm}\\
  $\ast$ Let $ 1 \leq i \leq l$ such that $B^{\gamma}_{i} = \lbrace v_{s_{i}},v_{s_{i}+1},v_{s_{i}+2},v_{s_{i}+3},v_{s_{i}+4},v_{q_{i}-1},v_{q_{i}} \rbrace$ is a left $\gamma_{2}$-boat of $H$ under $\theta$. Let $\widehat{B^{\gamma}_{i}}$ be the $13$-vertex digraph  that is obtained from $B^{\gamma}_{i}$ by deleting the arcs
$(v_{s_{i}+2},v_{q_{i}-1}),(v_{s_{i}+3},v_{q_{i}}),(v_{s_{i}},v_{s_{i}+1})$ and adding six extra vertices $x_{i}$ just after $v_{s_{i}}$, $r_{i}$ just after  $v_{s_{i}+4}$, $g_{i}$ just after $r_{i}$, $m_{i}$ just after $g_{i}$, $w_{i}$ just after $v_{s_{i}+3}$, and $y_{i}$ just before $v_{q_{i}-1}$, such that $ V(\widehat{B^{\gamma}_{i}})\backslash\lbrace r_{i},v_{q_{i}-1},v_{q_{i}}\rbrace\rightarrow y_{i}\rightarrow \lbrace  r_{i},v_{q_{i}-1},v_{q_{i}}\rbrace , \lbrace v_{s_{i}},g_{i}\rbrace\rightarrow x_{i}\rightarrow V(\widehat{B^{\gamma}_{i}})\backslash\lbrace v_{s_{i}},g_{i}\rbrace , \lbrace v_{s_{i}},...,v_{s_{i}+3},m_{i} \rbrace\rightarrow w_{i}\rightarrow \lbrace v_{s_{i}+4},r_{i},g_{i},v_{q_{i}-1},v_{q_{i}}\rbrace , \lbrace v_{s_{i}},...,v_{s_{i}+4}\rbrace \rightarrow \lbrace r_{i},g_{i},m_{i}\rbrace \rightarrow \lbrace v_{q_{i}-1},v_{q_{i}}\rbrace , r_{i}\rightarrow \lbrace g_{i},m_{i}\rbrace$, $g_{i}\rightarrow m_{i}$. 
  We write $\widehat{B^{\gamma}_{i}} = \lbrace v_{s_{i}},x_{i},v_{s_{i}+1},v_{s_{i}+2},v_{s_{i}+3},w_{i},v_{s_{i}+4},r_{i},g_{i},m_{i},y_{i},v_{q_{i}-1},$ $v_{q_{i}} \rbrace$ and we call $(v_{s_{i}},x_{i},v_{s_{i}+1},v_{s_{i}+2},v_{s_{i}+3},w_{i},v_{s_{i}+4},r_{i},g_{i},m_{i},y_{i},v_{q_{i}-1},v_{q_{i}})$ the \textit{forest ordering of $\widehat{B^{\gamma}_{i}}$} and $x_{i},v_{s_{i}+1},v_{s_{i}+2},v_{s_{i}+3},w_{i},r_{i},v_{q_{i}-1}$ the \textit{leaves of} $\widehat{B^{\gamma}_{i}}$. We say that $\widehat{B^{\gamma}_{i}}$ is the \textit{mutant left $\gamma_{2}$-boat} (also we say that $\widehat{B^{\gamma}_{i}}$ is the \textit{digraph corresponding to} $B^{\gamma}_{i}$ \textit{under} $\theta$) (see Figure \ref{fig:boatdigraphs}). \vspace{1.5mm}\\
  $\ast$ Let $ 1 \leq i \leq l$ such that $B^{\gamma}_{i} = \lbrace v_{q_{i}-1},v_{q_{i}},v_{s_{i}},v_{s_{i}+1},v_{s_{i}+2},v_{s_{i}+3},v_{s_{i}+4} \rbrace$ is a right $\gamma_{1}$-boat of $H$ under $\theta$. Let $\widehat{B^{\gamma}_{i}}$ be the $13$-vertex digraph  that is obtained from $B^{\gamma}_{i}$ by deleting the arcs
$(v_{q_{i}-1},v_{s_{i}+2}),(v_{q_{i}},v_{s_{i}+1}),(v_{s_{i}+3},$ $v_{s_{i}+4})$ and adding six extra vertices $x_{i}$ just after $v_{s_{i}+3}$, $r_{i}$ just before  $v_{s_{i}}$, $g_{i}$ just before $r_{i}$, $m_{i}$ just before $g_{i}$, $w_{i}$ just after $v_{s_{i}}$, and $y_{i}$ just before $v_{q_{i}-1}$, such that $r_{i}\rightarrow y_{i}\rightarrow V(\widehat{B^{\gamma}_{i}})\backslash\lbrace r_{i}\rbrace , V(\widehat{B^{\gamma}_{i}})\backslash\lbrace g_{i},v_{s_{i}+4}\rbrace \rightarrow x_{i}\rightarrow \lbrace v_{s_{i}+4},g_{i}\rbrace , \lbrace v_{q_{i}-1},v_{q_{i}},g_{i},r_{i},v_{s_{i}}\rbrace \rightarrow w_{i}\rightarrow  \lbrace m_{i},v_{s_{i}+1},...,v_{s_{i}+4} \rbrace , \lbrace v_{q_{i}-1},v_{q_{i}}\rbrace \rightarrow \lbrace m_{i},g_{i},r_{i}\rbrace \rightarrow \lbrace v_{s_{i}},...,v_{s_{i}+4}\rbrace ,$ $ m_{i}\rightarrow \lbrace g_{i},r_{i}\rbrace$, $g_{i}\rightarrow r_{i}$.
  We write $\widehat{B^{\gamma}_{i}} = \lbrace y_{i},v_{q_{i}-1},v_{q_{i}},m_{i},g_{i},r_{i},v_{s_{i}},w_{i},v_{s_{i}+1},v_{s_{i}+2},v_{s_{i}+3},x_{i},v_{s_{i}+4} \rbrace$ and we call $(y_{i},v_{q_{i}-1},v_{q_{i}},m_{i},g_{i},r_{i},v_{s_{i}},w_{i},v_{s_{i}+1},v_{s_{i}+2},v_{s_{i}+3},x_{i},v_{s_{i}+4})$ the \textit{forest ordering of $\widehat{B^{\gamma}_{i}}$} and $x_{i},v_{s_{i}+1},v_{s_{i}+2},$ $v_{s_{i}+3},w_{i},r_{i},v_{q_{i}-1}$ the \textit{leaves of} $\widehat{B^{\gamma}_{i}}$. We say that $\widehat{B^{\gamma}_{i}}$ is the \textit{mutant right $\gamma_{1}$-boat} (also we say that $\widehat{B^{\gamma}_{i}}$ is the \textit{digraph corresponding to} $B^{\gamma}_{i}$ \textit{under} $\theta$) (see Figure \ref{fig:boatdigraphs}). \vspace{1.5mm}\\ 
  $\ast$ Let $ 1 \leq i \leq l$ such that $B^{\gamma}_{i} = \lbrace v_{q_{i}},v_{q_{i}+1},v_{s_{i}},v_{s_{i}+1},v_{s_{i}+2},v_{s_{i}+3},v_{s_{i}+4} \rbrace$ is a right $\gamma_{2}$-boat of $H$ under $\theta$. Let $\widehat{B^{\gamma}_{i}}$ be the $13$-vertex digraph  that is obtained from $B^{\gamma}_{i}$ by deleting the arcs
$(v_{q_{i}+1},v_{s_{i}+2}),(v_{q_{i}},v_{s_{i}+1}),(v_{s_{i}+3},$ $v_{s_{i}+4})$ and adding six extra vertices $x_{i}$ just after $v_{s_{i}+3}$, $r_{i}$ just before  $v_{s_{i}}$, $g_{i}$ just before $r_{i}$, $m_{i}$ just before $g_{i}$, $w_{i}$ just after $v_{s_{i}}$, and $y_{i}$ just after $v_{q_{i}+1}$, such that $\lbrace v_{q_{i}},v_{q_{i}+1},r_{i}\rbrace\rightarrow y_{i}\rightarrow V(\widehat{B^{\gamma}_{i}})\backslash\lbrace v_{q_{i}},v_{q_{i}+1},r_{i}\rbrace , V(\widehat{B^{\gamma}_{i}})\backslash\lbrace g_{i},$ $v_{s_{i}+4}\rbrace \rightarrow x_{i}\rightarrow \lbrace v_{s_{i}+4},g_{i}\rbrace , \lbrace v_{q_{i}},v_{q_{i}+1},g_{i},r_{i},v_{s_{i}}\rbrace \rightarrow w_{i}\rightarrow  \lbrace m_{i},v_{s_{i}+1},...,v_{s_{i}+4} \rbrace , \lbrace v_{q_{i}},v_{q_{i}+1}\rbrace \rightarrow \lbrace m_{i},g_{i},r_{i}\rbrace \rightarrow \lbrace v_{s_{i}},...,v_{s_{i}+4}\rbrace , m_{i}\rightarrow \lbrace g_{i},r_{i}\rbrace$, $g_{i}\rightarrow r_{i}$. 
  We write $\widehat{B^{\gamma}_{i}} = \lbrace v_{q_{i}},v_{q_{i}+1},y_{i},m_{i},g_{i},r_{i},v_{s_{i}},w_{i},v_{s_{i}+1},v_{s_{i}+2},v_{s_{i}+3},x_{i},$ $v_{s_{i}+4} \rbrace$ and we call $(v_{q_{i}},v_{q_{i}+1},y_{i},m_{i},g_{i},r_{i},v_{s_{i}},w_{i},v_{s_{i}+1},v_{s_{i}+2},v_{s_{i}+3},x_{i},v_{s_{i}+4})$ the \textit{forest ordering of $\widehat{B^{\gamma}_{i}}$} and $x_{i},v_{s_{i}+1},v_{s_{i}+2},v_{s_{i}+3},w_{i},r_{i},v_{q_{i}+1}$ the \textit{leaves of} $\widehat{B^{\gamma}_{i}}$. We say that $\widehat{B^{\gamma}_{i}}$ is the \textit{mutant right $\gamma_{2}$-boat} (also we say that $\widehat{B^{\gamma}_{i}}$ is the \textit{digraph corresponding to} $B^{\gamma}_{i}$ \textit{under} $\theta$) (see Figure \ref{fig:boatdigraphs}). \\
  \begin{figure}[h]
	\centering
	\includegraphics[width=1.\linewidth]{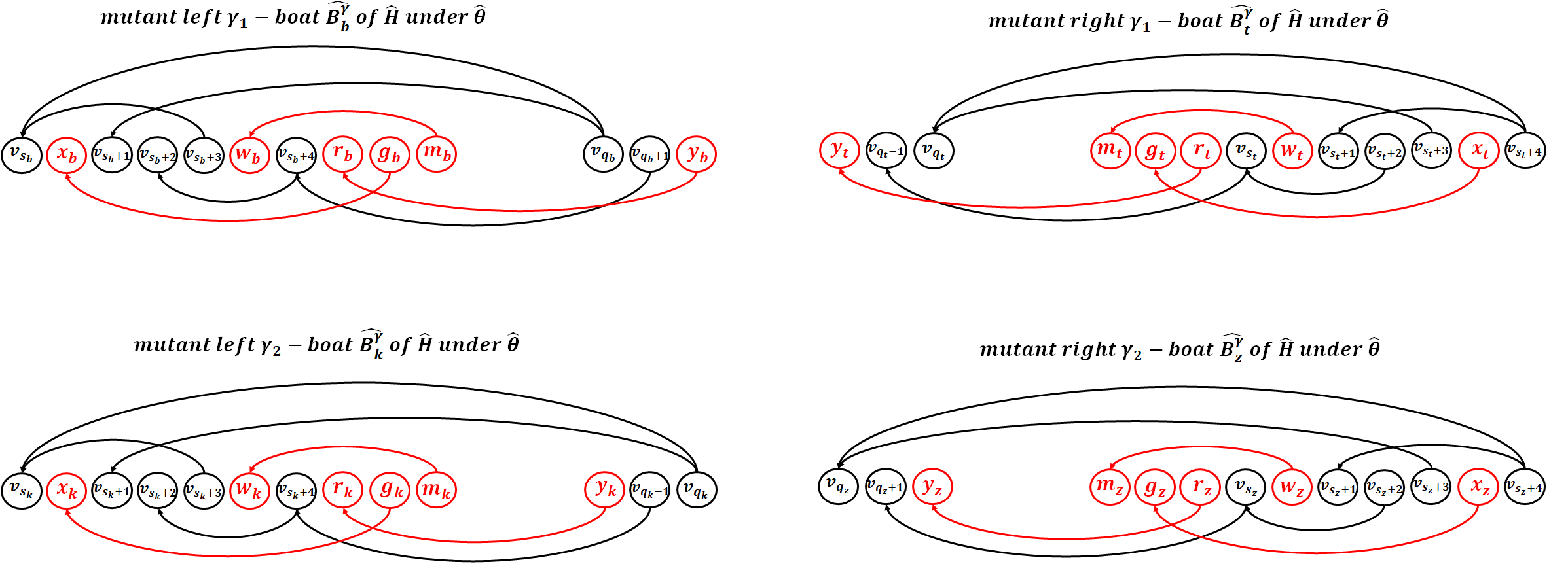}
	\caption{Mutant left $\gamma_{1}$-boat $\widehat{B^{\gamma}_{b}}$, mutant right $\gamma_{1}$-boat $\widehat{B^{\gamma}_{t}}$, mutant left $\gamma_{2}$-boat $\widehat{B^{\gamma}_{k}}$, and mutant right $\gamma_{2}$-boat $\widehat{B^{\gamma}_{z}}$. All the backward arcs are drawn. All the non-drawn arcs are forward except that the arcs $(v_{s_{b}+2},v_{q_{b}+1}),(v_{s_{b}+3},v_{q_{b}}),(v_{s_{b}},v_{s_{b}+1})\notin \widehat{B_{b}^{\gamma}}$, $(v_{s_{k}+2},v_{q_{k}-1}),(v_{s_{k}+3},v_{q_{k}}),(v_{s_{k}},v_{s_{k}+1})\notin \widehat{B_{k}^{\gamma}}$, $(v_{q_{t}-1},v_{s_{t}+2}),(v_{s_{t}+3},v_{s_{t}+4}),(v_{q_{t}},v_{s_{t}+1})\notin \widehat{B_{t}^{\gamma}}$, $(v_{q_{z}+1},v_{s_{z}+2}),(v_{s_{z}+3},v_{s_{z}+4}),(v_{q_{z}},v_{s_{z}+1})\notin \widehat{B_{z}^{\gamma}}$.}
	\label{fig:boatdigraphs}
\end{figure}
\\We are ready now to define the \textit{corresponding digraph of a flotilla-galaxy}.\vspace{2mm}\\Let $\widehat{H}$ be the digraph obtained from $H$ by replacing $B^{\gamma}_{i}$ by its corresponding digraph $\widehat{B^{\gamma}_{i}}$ for $i=1,...,l$, and let $\widehat{\theta}$ be the obtained ordering of $\widehat{H}$ (i.e $\widehat{\theta}$ is obtained from $\theta$ by replacing the vertices of $B^{\gamma}_{i}$ by the vertices of $\widehat{B^{\gamma}_{i}}$ for $i=1,...,l$ such that for all $ 1\leq i\leq l$, $\widehat{B^{\gamma}_{i}}$ has its forest ordering under $\widehat{\theta}$, and such that:\\$\bullet$  if $B^{\gamma}_{i}$ is a left $\gamma_{1}$-boat, then $v_{s_{i}},x_{i},v_{s_{i}+1},v_{s_{i}+2},v_{s_{i}+3},w_{i},v_{s_{i}+4},r_{i},g_{i},m_{i}$ are consecutive  under $\widehat{\theta}$ and $v_{q_{i}},v_{q_{i}+1},y_{i}$ are consecutive  under $\widehat{\theta}$.\\$\bullet$ if $B^{\gamma}_{i}$ a left $\gamma_{2}$-boat, then $v_{s_{i}},x_{i},v_{s_{i}+1},v_{s_{i}+2},v_{s_{i}+3},w_{i},v_{s_{i}+4},r_{i},g_{i},m_{i}$ are consecutive  under $\widehat{\theta}$ and $y_{i},v_{q_{i}-1},v_{q_{i}}$ are consecutive  under $\widehat{\theta}$.\\$\bullet$ if $B^{\gamma}_{i}$ is a right $\gamma_{1}$-boat, then $y_{i},v_{q_{i}-1},v_{q_{i}}$ are consecutive  under $\widehat{\theta}$ and $m_{i},g_{i},r_{i},v_{s_{i}},w_{i},v_{s_{i}+1},v_{s_{i}+2},v_{s_{i}+3},x_{i},$ $v_{s_{i}+4}$ are consecutive  under $\widehat{\theta}$.\\ $\bullet$ if $B^{\gamma}_{i}$ is a right $\gamma_{2}$-boat, then $v_{q_{i}},v_{q_{i}+1},y_{i}$ are consecutive  under $\widehat{\theta}$ and $m_{i},g_{i},r_{i},v_{s_{i}},w_{i},v_{s_{i}+1},v_{s_{i}+2},v_{s_{i}+3},x_{i},$ $v_{s_{i}+4}$ are consecutive  under $\widehat{\theta}$).\\ We have $V(\widehat{H}) = V(H)\cup (\bigcup_{i=1}^{l}\lbrace x_{i},w_{i},r_{i},g_{i},m_{i},y_{i}\rbrace)$ and\\ $E(\widehat{H})= (E(H)\backslash \displaystyle{\bigcup_{i=1}^{l}E(B^{\gamma}_{i}))}\cup \displaystyle{\bigcup_{i=1}^{l}}E(\widehat{B^{\gamma}_{i}}) \cup [[\displaystyle{\bigcup_{i=1}^{l}}$$(\displaystyle{\bigcup_{p\in X_{i}}}$$(\lbrace (x,p): x<_{\widehat{\theta}}p$ and $x\in V(\widehat{H}) \rbrace \cup $$\lbrace (p,x): p<_{\widehat{\theta}}x$ and $x\in V(\widehat{H}) \rbrace ))]\backslash \displaystyle{\bigcup_{i=1}^{l}} ( \lbrace(r_{i},y_{i}): r_{i}<_{\widehat{\theta}}$ $ y_{i} \rbrace \cup \lbrace(y_{i},r_{i}): y_{i}<_{\widehat{\theta}}r_{i} \rbrace \cup \lbrace(m_{i},w_{i}): m_{i}<_{\widehat{\theta}}$ $ w_{i} \rbrace \cup \lbrace(w_{i},m_{i}): w_{i}<_{\widehat{\theta}}m_{i} \rbrace \cup \lbrace(x_{i},g_{i}): x_{i}<_{\widehat{\theta}}$ $ g_{i} \rbrace \cup \lbrace(g_{i},x_{i}): g_{i}<_{\widehat{\theta}}x_{i} \rbrace )]$, where $X_{i}=\lbrace m_{i},g_{i},x_{i},w_{i},r_{i},y_{i}\rbrace$. Note that $E(B(\widehat{H},\widehat{\theta})) = E(B(H,\theta))\cup\lbrace x_{i}g_{i},w_{i}m_{i},r_{i}y_{i}: i=1,...,l\rbrace$. We say that $\widehat{H}$ is the \textit{digraph corresponding to} $H$ \textit{under} $\theta$, and $\widehat{\theta}$ is the \textit{ordering of $V(\widehat{H})$ corresponding to} $\theta$.
\subsubsection{Corresponding smooth $(c,\lambda,w)$-structure} 
Let $s$ be a $\lbrace 0,1 \rbrace$$-$vector. Denote $s_{c}$ the vector obtained from $s$ by replacing every subsequence of consecutive $1'$s by single $1$. 

Let $H$ be a regular flotilla-galaxy under an ordering $\theta$ of its vertices with $\mid$$H$$\mid$ $= h$. Let $B^{\gamma}_{1},...,B^{\gamma}_{l}$ be the $\gamma$-boats of $H$ under $\theta$, and let $Q_{1},...,Q_{l}$ be the frontier stars of $H$$\mid$$X$ under $\overline{\theta}$. Let $\widehat{\theta} = (u_{1},...,u_{h+6l})$ be the ordering of $V(\widehat{H})$ corresponding to $\theta$. For $i \in \lbrace 0,...,l \rbrace$ define $\widehat{H^{i}} = \widehat{H}$$\mid$$\bigcup_{j=1}^{i}(V(\widehat{B^{\gamma}_{j}})\cup V(Q_i))$ where $\widehat{H^{l}} = \widehat{H}$, and $\widehat{H^{0}}$ is the empty digraph. 
Let $s^{\widehat{H},\widehat{\theta}}$ be a $\lbrace 0,1 \rbrace$$-$vector such that $s^{\widehat{H},\widehat{\theta}}(i) = 1$ if and only if $u_{i}$ is a leaf of one of the stars  of $\widehat{H}$ under $\widehat{\theta}$ or a leaf of one of $\widehat{B^{\gamma}_{j}}$ of $\widehat{H}$ under $\widehat{\theta}$ for $j=1,...,l$. Let $w = s_{c}^{\widehat{H},\widehat{\theta}}$ and let $i_{r}$ be such that $w_{i_{r}}=1$. Let $j$ be such that $s^{\widehat{H},\widehat{\theta}}_{j}=1$. We say that $s^{\widehat{H},\widehat{\theta}}_{j}$ \textit{corresponds to} $w_{i_{r}}$ if $s^{\widehat{H},\widehat{\theta}}_{j}$ belongs to the subsequence of consecutive $1'$s that is replaced by the entry $w_{i_{r}}$. For $k \in \lbrace 1,...,l \rbrace$ let $\widehat{\theta}_{k} = (v_{k_{1}},...,v_{k_{t_{k}}})$ with $t_{k}=$ $\mid$$ \widehat{H^{k}}$$ \mid$, be the restriction of $\widehat{\theta}$ to $V(\widehat{H}^{k})$. Let $s^{\widehat{H},\widehat{\theta}}_{\widehat{H}^{k}}$ be the restriction of $s^{\widehat{H},\widehat{\theta}}$ to the $0's$ and $1's$ corresponding to $V(\widehat{H}^{k})$ (notice that $s^{\widehat{H},\widehat{\theta}}_{\widehat{H}^{k}}= s^{\widehat{H}^{k},\widehat{\theta}_{k}}$) and let $^{c}s^{\widehat{H},\widehat{\theta}}_{\widehat{H}^{k}}$ be the vector obtained from $s^{\widehat{H},\widehat{\theta}}_{\widehat{H}^{k}}$ by replacing every subsequence of consecutive $1's$ corresponding to the same entry of $s^{\widehat{H},\widehat{\theta}}_{c}$ by single $1$. We say that a smooth $(c,\lambda ,w)$-structure of a tournament $T$ \textit{corresponds}  \textit{to $\widehat{H}^{k}$ under $(\widehat{H},\widehat{\theta})$} if $w =$ $ ^{c}s^{\widehat{H},\widehat{\theta}}_{\widehat{H}^{k}}$. Notice that $s^{\widehat{H},\widehat{\theta}}_{\widehat{H}^{l}}=s^{\widehat{H},\widehat{\theta}}$ and $^{c}s^{\widehat{H},\widehat{\theta}}_{\widehat{H}^{l}}=s^{\widehat{H},\widehat{\theta}}_{c}$.\\
Let $\nu =$ $^{c}s^{\widehat{H},\widehat{\theta}}_{\widehat{H}^{k}}$. Let $\delta^{\nu}:$ $\lbrace j: \nu_{j} = 1 \rbrace \rightarrow \mathbb{N}$ be a function that assigns to every nonzero entry of $\nu$ the number of consecutive $1'$s of $s^{\widehat{H},\widehat{\theta}}_{\widehat{H}^{k}}$ replaced by that entry of $\nu$.
Fix $k \in \lbrace 0,...,l \rbrace$. Let $(S_{1},...,S_{\mid w \mid})$ be a smooth $(c,\lambda ,w)$-structure corresponding to $\widehat{H}^{k}$ under $(\widehat{H},\widehat{\theta})$.
 Let $i_{r}$ be such that $w(i_{r}) = 1$. Assume that $S_{i_{r}} = \lbrace s^{1}_{i_{r}},...,s_{i_{r}}^{\mid S_{i_{r}} \mid} \rbrace$ and $(s^{1}_{i_{r}},...,s_{i_{r}}^{\mid S_{i_{r}} \mid})$ is a transitive ordering. Write $m(i_{r}) = \lfloor\frac{\mid S_{i_{r}} \mid}{\delta^{w}(i_{r})}\rfloor$. Denote $S^{j}_{i_{r}} = \lbrace s^{(j-1)m(i_{r})+1}_{i_{r}},...,s_{i_{r}}^{jm(i_{r})} \rbrace$ for $j \in \lbrace 1,...,\delta^{w}(i_{r}) \rbrace$. For every $v \in S^{j}_{i_{r}}$ denote $\xi(v) = (\mid$$\lbrace k < i_{r}: w(k) = 0 \rbrace$$\mid$ $+$ $\displaystyle{\sum_{k < i_{r}: w(k) = 1}\delta^{w}(k) })$ $+$ $j$. For every $v \in S_{i_{r}}$ such that $w(i_{r}) = 0$ denote $\xi(v) = (\mid$$\lbrace k < i_{r}: w(k) = 0 \rbrace$$\mid$ $+$ $\displaystyle{\sum_{k < i_{r}: w(k) = 1}\delta^{w}(k) })$ $+$ $1$. We say that $\widehat{H}^{k}$ is \textit{well-contained in} $(S_{1},...,S_{\mid w \mid})$ that corresponds to $\widehat{H}^{k}$ under $(\widehat{H},\widehat{\theta})$ if there is an injective homomorphism $f$ of $\widehat{H}^{k}$ into $T$$\mid$$\bigcup_{i = 1}^{\mid w \mid}S_{i}$ such that $\xi(f(u_{k_j})) = j$ for every $j \in \lbrace 1,...,t_{k} \rbrace$, where $\widehat{\theta}_{k} = (u_{k_{1}},...,u_{k_{t_{k}}})$. 
 \subsection{Proof}
 \begin{theorem}\label{gftheorem}
Let $H$ be a regular generalized flotilla-galaxy under an ordering $\theta$ of its vertices with $\mid$$H$$\mid$ $= h$. Let $B^{\gamma}_{1},...,B^{\gamma}_{l}$ be the $\gamma$-boats of $H$ under $\theta$ and let $Q_{1},...,Q_{l}$ be the frontier stars of $H$$\mid$$X$ under $\overline{\theta}$. Let $0 < \lambda < \frac{1}{(4h)^{h+4}}$, $c > 0$ be constants, and $w$ be a $\lbrace 0,1 \rbrace$-vector. Fix $k \in \lbrace 0,...,l \rbrace$ and let $\widehat{\lambda} = (2h)^{l-k}\lambda$ and $\widehat{c} = \frac{c}{(2h)^{l-k}}$. There exist $ \epsilon_{k} > 0$ such that $\forall 0 < \epsilon < \epsilon_{k}$, for every $\epsilon$-critical tournament $T$ with $\mid$$T$$\mid$ $= n$ containing $\chi = (S_{1},...,S_{\mid w \mid})$ as a smooth $(\widehat{c},\widehat{\lambda},w)$-structure corresponding to $\widehat{H^{k}}$ under  $(\widehat{H},\widehat{\theta})$, we have $\widehat{H^{k}}$ is well-contained in $\chi$.  
\end{theorem}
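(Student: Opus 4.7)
The plan is induction on $k$, closely following the template of the flotilla theorem but now extracting the star $Q_{k}$ in addition to the boat $\widehat{\Sigma_{k}}$ at each step. The base case $k=0$ is immediate since $\widehat{H^{0}}$ is the empty digraph. For the inductive step, given a smooth $(\widehat{c},\widehat{\lambda},w)$-structure $\chi=(S_{1},\ldots,S_{\mid w\mid})$ corresponding to $\widehat{H^{k}}$ under $\widehat{\theta_{k}}$, I would (i) extract a copy $X$ of $\widehat{H^{k}}\mid V(\widehat{\Sigma_{k}})$ from the sets of $\chi$ that host the vertices of $\widehat{\Sigma_{k}}$, (ii) extract a copy $Y$ of $Q_{k}$ from the sets that host its vertices, (iii) shrink each remaining $S_{i}$ to a subset $S_{i}^{*}$ with the correct arcs to $V(X)\cup V(Y)$, and (iv) apply the induction hypothesis to obtain $\widehat{H^{k-1}}$ inside the shrunken structure, then merge with $X$ and $Y$.

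For step (i) I would run exactly the argument from Case 1 or Case 2 of the flotilla theorem depending on whether $\Sigma_{k}$ is a left or right boat. Lemmas \ref{f} and \ref{v} applied to the two linear sets $D_{1},D_{5}$ together with two of the three transitive pieces of the middle block yield four vertices $x_{k},u_{k},v_{k},y_{k}$ with the required backward arcs, and Lemma \ref{g} applied inside the remaining transitive piece supplies the fifth vertex $z_{k}$. The smallness of $\widehat{\lambda}$ needed for Lemma \ref{g} to yield a nonempty slice is guaranteed by $\widehat{\lambda}\leq(2h)^{l-k}\lambda<(2h)^{h}/(4h)^{h+4}$.

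Step (ii) is the genuinely new ingredient and is the main obstacle. Suppose $Q_{k}$ is a right star with $t$ leaves whose center sits in the linear set $S_{r_{Q}}$ and whose leaves occupy a transitive block $S_{j_{Q}}$ split as $S_{j_{Q}}^{1},\ldots,S_{j_{Q}}^{t}$ with $j_{Q}<r_{Q}$. For each $i\in\lbrace 1,\ldots,t\rbrace$ define $A_{i}=\lbrace c\in S_{r_{Q}}:S_{j_{Q}}^{i}\text{ is complete to }c\text{ in }T\rbrace$. The pieces $S_{j_{Q}}^{i}$ are transitive sets of size at least $\widehat{c}\cdot tr(T)/(2h)$, so choosing $\epsilon_{k}$ below $\log_{\widehat{c}/(2h)}(1-\widehat{c}/(2h))$ forces $\mid A_{i}\mid<\widehat{c}n/(2h)$ via Lemma \ref{f}: were $\mid A_{i}\mid\geq\widehat{c}n/(2h)$, the set $A_{i}$ would be complete from $S_{j_{Q}}^{i}$, contradicting Lemma \ref{f}. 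A union bound over the $t\leq h$ leaves then gives $\mid\bigcup_{i}A_{i}\mid<h\cdot\widehat{c}n/(2h)=\widehat{c}n/2<\mid S_{r_{Q}}\mid$, so there is some $c\in S_{r_{Q}}\setminus\bigcup_{i}A_{i}$; for each $i$ pick $l_{i}\in S_{j_{Q}}^{i}$ with $c\rightarrow l_{i}$, and set $Y=\lbrace c,l_{1},\ldots,l_{t}\rbrace$. The left-star case is symmetric. The disjointness of $V(X)$ and $V(Y)$ in $\chi$ follows from the flotilla-galaxy condition, which forbids a boat vertex from lying between leaves of a star. It is precisely this union bound over up to $h$ leaves that introduces the factor $(2h)$ into the constants $\widehat{c},\widehat{\lambda}$ and forces the ceiling $\lambda<1/(4h)^{h+4}$.

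For step (iii), set $S_{i}^{*}=\bigcap_{p\in V(X)\cup V(Y)}S_{i,p}$ for every remaining index $i$. Since $\mid V(X)\cup V(Y)\mid\leq 2h$, Lemma \ref{g} yields $\mid S_{i}^{*}\mid\geq(1-2h\widehat{\lambda})\mid S_{i}\mid\geq\mid S_{i}\mid/2$. Lemma \ref{b} then certifies $(S_{i}^{*})$ as a smooth $(\widehat{c}/(2h),2h\widehat{\lambda},w^{*})$-structure corresponding to $\widehat{H^{k-1}}$ for the appropriate vector $w^{*}$, with parameters matching exactly those of the induction hypothesis at level $k-1$, since $\widehat{c}/(2h)=c/(2h)^{l-(k-1)}$ and $2h\widehat{\lambda}=(2h)^{l-(k-1)}\lambda$. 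Taking $\epsilon_{k}<\min\lbrace\epsilon_{k-1},\log_{\widehat{c}/(2h)}(1-\widehat{c}/(2h)),\log_{\widehat{c}/4}(1/2)\rbrace$ and applying the induction hypothesis yields a well-contained copy of $\widehat{H^{k-1}}$, which merged with $X$ and $Y$ produces the desired well-contained copy of $\widehat{H^{k}}$.
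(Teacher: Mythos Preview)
Your inductive scaffold is right, and step (i) matches the paper. The real gap is in step (ii): you extract the star $Y$ from the raw host sets $S_{r_{Q}}$ and $S_{j_{Q}}^{i}$ without first restricting them to be compatible with the boat copy $X$. The smooth structure only guarantees that \emph{most} arcs from a vertex of $X$ into the star-hosting sets are forward, not all of them; so once you pick the center $c$ and leaves $l_{1},\ldots,l_{t}$ arbitrarily inside those sets, nothing forces the arcs between $\{x_{k},z_{k},u_{k},v_{k},y_{k}\}$ and $\{c,l_{1},\ldots,l_{t}\}$ to be the forward arcs that $\widehat{H^{k}}$ demands. Step (iii) cannot repair this, since it only shrinks the \emph{remaining} sets. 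The paper fixes this by first replacing each star-hosting piece $R_{i}$ by $R_{i}^{*}=\bigcap_{p\in V(X)}R_{i,p}$ (Lemma \ref{g} gives $\mid R_{i}^{*}\mid\geq(1-5h\widehat{\lambda})\mid R_{i}\mid$) and only then running the union-bound argument inside $R_{0}^{*},\ldots,R_{q}^{*}$ to locate the center and leaves. That order of operations is essential.

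A second, smaller omission is the case analysis on where the star leaves live. You assume they fill their own transitive block $S_{j_{Q}}$, but in a flotilla-galaxy the leaves of $Q_{k}$ can be consecutive with (and hence share the transitive block $S_{b}$ of) the interiors of $\widehat{\Sigma_{k}}$; the definition only forbids boat vertices from lying \emph{between} leaves of a star, not from being adjacent to them in the ordering. The paper accordingly splits into Case 1 (all $R_{i}\subseteq S_{b}$) and Case 2 ($R_{i}\subseteq S_{f}$ for some $f\neq b$), and in both cases also rebuilds the residual slices of $S_{b}$ (and of $S_{f}$) via the sets $S_{b}^{*\eta}$, $S_{f}^{*\eta}$ before invoking the induction hypothesis. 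Your step (iii) treats these blocks as if they either survive whole or disappear, which is not the correct bookkeeping when a block contributes some slices to $\widehat{\Sigma_{k}}\cup Q_{k}$ and others to $\widehat{H^{k-1}}$.
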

\begin{proof}
The proof is by induction on $k$. For $k=0$ the statement is obvious since $\widehat{H^{0}}$ is the empty digraph. Suppose that $\chi = (S_{1},...,S_{\mid w \mid})$ is a smooth $(\widehat{c},\widehat{\lambda},w)$-structure in $T$ corresponding to $\widehat{H^{k}}$ under $(\hat{H},\hat{\theta})$, with $\hat{\theta}=(h_1,...,h_{h+6l})$. Let $\widehat{\theta}_{k} = (h_{k_{1}},...,h_{k_{s}})$ be the restriction of $\widehat{\theta}$ to $V(\widehat{H^{k}})$, where $s=$ $\mid$$ \widehat{H^{k}}$$ \mid$. Let $\widehat{B^{\gamma}_{k}} = \lbrace h_{k_{q_{1}}},...,h_{k_{q_{13}}} \rbrace$. Let $h_{k_{p_{0}}}$ be the center of $Q_{k}$ and $h_{k_{p_{1}}},...,h_{k_{p_{q}}}$ be its leafs for some integer $q>0$. Let $D_{i} = \lbrace v \in \bigcup_{j=1}^{\mid w \mid}S_{j};$ $ \xi(v) = q_{i} \rbrace$ for $i=1,...,13$. Assume that $B^{\gamma}_{k}$ is a left $\gamma_{1}$$-$boat (else, the argument is similar, and we omit it). Then there exist $ 1 \leq y \leq \mid$$w$$\mid$ and $ 1 \leq b \leq \mid$$w$$\mid$, such that $y+5< b$ and $D_{1} = S_{y}$, $D_{2} = S^{1}_{y+1}$, $D_{3} = S^{2}_{y+1}$, $D_{4} = S^{3}_{y+1}$, $D_{5} = S^{4}_{y+1}$, $D_{6} = S^{5}_{y+1}$, $D_{7} = S_{y+2}$, $D_{8} = S_{y+3}$, $D_{9} = S_{y+4}$, $D_{10} = S_{y+5}$, $D_{11} = S_{b}$, $D_{12} = S_{b+1}$, $D_{13} = S_{b+2}$ with $w(y)=w(y+2)=w(y+4)=w(y+5)=w(b) =w(b+2)= 0$ and $w(y+1) =w(y+3)=w(b+1)= 1$. By Lemma \ref{s}, since $T$ is $\epsilon$-critical and $\epsilon < min\lbrace log_{\frac{\widehat{c}}{4}}(\frac{1}{2}), log_{\frac{\widehat{c}}{2}}(1-\frac{\widehat{c}}{6})\rbrace$, there exist vertices $x_{1}\in D_{1}$, $x_{3}\in D_{3}$, $x_{5}\in D_{5}$, $x_{11}\in D_{11}$, such that $\lbrace x_{1},x_{3}\rbrace \leftarrow x_{11}$ and $x_{1}\leftarrow x_{5}$. Let $D_{4}^{*} = \lbrace x_{4}\in D_{4}; x_{1}\rightarrow x_{4} \rightarrow x_{11} \rbrace$, $D_{7}^{*} = \lbrace x_{7}\in D_{7}; \lbrace x_{1},x_{3},x_{5}\rbrace \rightarrow x_{7} \rightarrow x_{11} \rbrace$, and $D_{12}^{*} = \lbrace x_{12}\in D_{12}; \lbrace x_{1},x_{3},x_{5},x_{11}\rbrace \rightarrow x_{12} \rbrace$. Then by Lemma \ref{g}, $\mid$$D_{7}^{*}$$\mid$ $\geq (1-4\widehat{\lambda})\widehat{c}n\geq \frac{\widehat{c}}{2}n$ since $\widehat{\lambda} \leq \frac{1}{8}$, $\mid$$D_{12}^{*}$$\mid$ $\geq (1-4\widehat{\lambda})\widehat{c}tr(T)\geq \frac{\widehat{c}}{2}tr(T)$ since $\widehat{\lambda} \leq \frac{1}{8}$, and $\mid$$D_{4}^{*}$$\mid$ $\geq \frac{1-12\widehat{\lambda}}{6}\widehat{c}tr(T)\geq \frac{\widehat{c}}{12}tr(T)$ since $\widehat{\lambda} \leq \frac{1}{24}$. 
 Since we can assume that $\epsilon < log_{\frac{\widehat{c}}{8}}(1-\frac{\widehat{c}}{12})$, then Lemma \ref{middle} implies that there exist vertices $x_{4}\in D_{4}^{*}$, $x_{7}\in D_{7}^{*}$, and $x_{12}\in D_{12}^{*}$ such that $x_{4}\leftarrow x_{7} \leftarrow x_{12}$. Let $D_{2}^{*} = \lbrace x_{2}\in D_{2}; x_{1}\rightarrow x_{2} \rightarrow \lbrace x_{7},x_{11},x_{12}\rbrace \rbrace$ and $D_{9}^{*} = \lbrace x_{9}\in D_{9}; \lbrace x_{1},x_{3},x_{4},x_{5},x_{7}\rbrace\rightarrow x_{9} \rightarrow \lbrace x_{11},x_{12}\rbrace \rbrace$. Then by Lemma \ref{g}, $\mid$$D_{2}^{*}$$\mid$ $\geq \frac{1-24\widehat{\lambda}}{6}\widehat{c}tr(T)\geq \frac{\widehat{c}}{12}tr(T)$ since $\widehat{\lambda} \leq \frac{1}{48}$ and $\mid$$D_{9}^{*}$$\mid$ $\geq (1-7\widehat{\lambda})\widehat{c}n\geq \frac{\widehat{c}}{2}n$ since $\widehat{\lambda} \leq \frac{1}{14}$. Since $\epsilon < log_{\frac{\widehat{c}}{2}}(1-\frac{\widehat{c}}{12})$, then Lemma \ref{f} implies that there exist vertices $x_{2}\in D_{2}^{*}$ and $x_{9}\in D_{9}^{*}$ such that $x_{2} \leftarrow x_{9}$. Let $D_{6}^{*} = \lbrace x_{6}\in D_{6}; x_{1}\rightarrow x_{6} \rightarrow \lbrace x_{7},x_{9},x_{11},x_{12}\rbrace \rbrace$ and $D_{10}^{*} = \lbrace x_{10}\in D_{10}; \lbrace x_{1},...,x_{5},x_{7},x_{9}\rbrace\rightarrow x_{10} \rightarrow \lbrace x_{11},x_{12}\rbrace \rbrace$. Then by Lemma \ref{g}, $\mid$$D_{6}^{*}$$\mid$ $\geq \frac{1-30\widehat{\lambda}}{6}\widehat{c}tr(T)\geq \frac{\widehat{c}}{12}tr(T)$ since $\widehat{\lambda} \leq \frac{1}{60}$ and $\mid$$D_{10}^{*}$$\mid$ $\geq (1-9\widehat{\lambda})\widehat{c}n\geq \frac{\widehat{c}}{2}n$ since $\widehat{\lambda} \leq \frac{1}{18}$. Since $\epsilon < log_{\frac{\widehat{c}}{2}}(1-\frac{\widehat{c}}{12})$, then Lemma \ref{f} implies that there exist vertices $x_{6}\in D_{6}^{*}$ and $x_{10}\in D_{10}^{*}$ such that $x_{6} \leftarrow x_{10}$. Let $D_{8}^{*} = \lbrace x_{8}\in D_{8}; \lbrace x_{1},x_{2},x_{3},x_{4},x_{5},x_{6},x_{7}\rbrace\rightarrow x_{8} \rightarrow \lbrace x_{9},x_{10},x_{11},x_{12}\rbrace \rbrace$ and $D_{13}^{*} = \lbrace x_{13}\in D_{13}; \lbrace x_{1},...,x_{7},x_{9},...,x_{12}\rbrace\rightarrow x_{13} \rbrace$. Then by Lemma \ref{g}, $\mid$$D_{8}^{*}$$\mid$ $\geq (1-11\widehat{\lambda})\widehat{c}tr(T)\geq \frac{\widehat{c}}{2}tr(T)$ since $\widehat{\lambda} \leq \frac{1}{22}$ and $\mid$$D_{13}^{*}$$\mid$ $\geq (1-11\widehat{\lambda})\widehat{c}n\geq \frac{\widehat{c}}{2}n$ since $\widehat{\lambda} \leq \frac{1}{22}$. Since $\epsilon < log_{\frac{\widehat{c}}{2}}(1-\frac{\widehat{c}}{2})$, then Lemma \ref{f} implies that there exist vertices $x_{8}\in D_{8}^{*}$ and $x_{13}\in D_{13}^{*}$ such that $x_{8} \leftarrow x_{13}$.  Then $T$$\mid$$\lbrace x_{1},...x_{13}\rbrace$ contains a copy of $\widehat{H^{k}}$$\mid$$V(\widehat{B^{\gamma}_{k}})$ where $(x_{1},...,x_{13})$ is its forest ordering. Denote this copy by $W$. For all $ 0 \leq i \leq q$, let $R_{i} = \lbrace v \in \bigcup_{j=1}^{\mid w \mid}S_{j};$ $ \xi(v) = p_{i} \rbrace$ and let $R_{i}^{*} = \bigcap_{x\in V(W)}R_{i,x}$.
Then there exist $ m,f \in \lbrace 1,...,\mid$$w$$\mid \rbrace \backslash \lbrace y,...,y+5,b,b+1,b+2 \rbrace$ with $w(m)=0$ and $w(f)=1$, such that $R_{0} = S_{m}$ and for all $ 1 \leq i \leq q$, $R_{i} \subseteq S_{f}$. Then by Lemma \ref{g}, $\mid$$R_{0}^{*}$$\mid$ $\geq (1-13\widehat{\lambda})\mid$$R_{0}$$\mid$ $\geq \frac{\mid R_{0}\mid}{2}$ $\geq\frac{\widehat{c}}{2}n$ since $\widehat{\lambda} \leq \frac{1}{26}$, and $\mid$$ R_{i}^{*}$$\mid$ $ \geq \frac{1-13h\widehat{\lambda}}{h}\mid$$ S_{f}$$ \mid$ $\geq \frac{\widehat{c}}{2h}tr(T)$ since $\widehat{\lambda} \leq \frac{1}{26h}$. Since we can assume that $\epsilon < log_{\frac{\widehat{c}}{4h}}(1-\frac{\widehat{c}}{2h})$, then by Lemma \ref{r} there exists vertices $r_{0},r_{1},...,r_{q}$ such that $r_{i} \in R_{i}^{*}$ for $i=0,1,...,q$ and \\
$\ast$ $r_{1},...,r_{q}$ are all adjacent from $r_{0}$ if $m>f$.\\
$\ast$ $r_{1},...,r_{q}$ are all adjacent to $r_{0}$ if $m<f$.\\
So $T$$\mid$$\lbrace x_{1},...,x_{13},r_{0},r_{1},...,r_{q} \rbrace$ contains a copy of $\widehat{H^{k}}$$\mid$$(V(\widehat{B^{\gamma}_{k}})\cup V(Q_{k}))$. Denote this copy by $Y$.    
 For all $ i \in \lbrace 1,...,\mid$$w$$\mid \rbrace \backslash \lbrace y,...,y+5,b,b+1,b+2,m,f \rbrace$, let $S_{i}^{*} = \bigcap_{x\in V(Y)}S_{i,x}$.  Then by Lemma \ref{g}, $\mid$$S_{i}^{*}$$\mid$ $\geq (1-\mid$$Y$$\mid\widehat{\lambda})\mid$$S_{i}$$\mid$ $\geq (1-(h+6)\widehat{\lambda})\mid$$S_{i}$$\mid$ $\geq \frac{\mid S_{i}\mid}{2h}$ since $\widehat{\lambda} \leq \frac{2h-1}{2h(h+6)}$.
Write $\mathcal{H} = \lbrace 1,...,s \rbrace \backslash \lbrace q_{1},...,q_{13},p_{0},...,p_{q} \rbrace$. If $\lbrace v\in S_{f}: \xi(v) \in \mathcal{H} \rbrace \neq \phi$, then define $J_{f} = \lbrace \eta \in \mathcal{H}: \exists v \in S_{f}$ and $\xi(v)= \eta \rbrace$. Now for all $ \eta \in J_{f}$, let $S_{f}^{*\eta}= \lbrace v \in S_{f}: \xi(v)=\eta$ and $v \in \displaystyle{\bigcap_{x\in V(Y)\backslash\lbrace r_{1},...,r_{q} \rbrace}S_{f,x}} \rbrace$. Then by Lemma \ref{g}, for all $ \eta \in J_{f}$, we have $\mid$$S_{f}^{*\eta}$$\mid$ $ \geq \frac{1-14h\widehat{\lambda}}{h}\mid $$S_{f}$$\mid $ $\geq \frac{\mid S_{f}\mid}{2h}$ since $\widehat{\lambda} \leq \frac{1}{28h}$. Now for all $ \eta \in J_{f}$, select arbitrary $\lceil \frac{\mid S_{f}\mid}{2h}\rceil$ vertices of $S_{f}^{*\eta}$ and denote the union of these $\mid$$J_{f}$$\mid$ sets by $S_{f}^{*}$.   
So we have defined $t$ sets $S_{1}^{*},...,S^{*}_{t}$, where $t = \mid$$w$$\mid$ $-10$ if $S_{f}^{*}$ is defined and $t = \mid$$w$$\mid$ $-11$ if $S_{f}^{*}$ is not defined. We have $\mid$$S_{i}^{*}$$\mid$ $\geq \frac{\widehat{c}}{2h}tr(T)$ for every defined $S_{i}^{*}$ with $w(i) = 1$, and $\mid$$S_{i}^{*}$$\mid$ $\geq \frac{\widehat{c}}{2h}n$ for every defined $S_{i}^{*}$ with $w(i) = 0$. Now Lemma \ref{b} implies that $\chi^{*}=(S_{1}^{*},...,S^{*}_{t})$ form a smooth $(\frac{\widehat{c}}{2h},2h\widehat{\lambda},w^{*})$$-$structure of $T$ corresponding to $\widehat{H^{k-1}}$ under  $(\hat{H},\hat{\theta})$, where $\frac{\widehat{c}}{2h}= \frac{c}{(2h)^{l-(k-1)}}, 2h\widehat{\lambda}=(2h)^{l-(k-1)}\lambda$, and $w^{*}$ is an appropriate $\lbrace 0,1 \rbrace$$-$vector.
Now take $\epsilon_{k} < min \lbrace \epsilon_{k-1}, log_{\frac{\widehat{c}}{4}}(\frac{1}{2}), log_{\frac{\widehat{c}}{8}}(1-\frac{\widehat{c}}{16}), log_{\frac{\widehat{c}}{4h}}(1-\frac{\widehat{c}}{2h}) \rbrace$. So by induction hypothesis $\widehat{H^{k-1}}$ is well-contained in $\chi^{*}$. Now by merging the well-contained copy of $\widehat{H^{k-1}}$ and $Y$ we get a copy of $\widehat{H^{k}}$. $\blacksquare$
\end{proof}
\begin{corollary}\label{gfcoroll}  
Let $H$ be a regular flotilla-galaxy under an ordering $\theta$ of its vertices. Let $B^{\gamma}_{1},...,B^{\gamma}_{l}$ be the $\gamma$-boats of $H$ under $\theta$, and let $Q_{1},...,Q_{l}$ be the frontier stars of $H$$\mid$$(V(H)\backslash \bigcup_{i=1}^{l}V(B^{\gamma}_{i}))$ under $\overline{\theta}$. Let $ \lambda > 0$ ($\lambda$ is small enough), $c > 0$ be constants, and let $w$ be a $\lbrace 0,1 \rbrace$-vector. Suppose that  $\chi = (S_{1},...,S_{\mid w \mid})$ is a smooth $(c,\lambda ,w)$-structure of an $\epsilon$-critical tournament $T$ ($\epsilon$ is small enough) corresponding to $\widehat{H}$ under $(\widehat{H},\widehat{\theta})$. Then $T$ contains $H$.
\end{corollary}  
\begin{proof}
$\widehat{H}=\widehat{H^{l}}$ is well-contained in $\chi$ by the previous theorem when taking $k=l$. For all $ 1 \leq i \leq l$, let $\widetilde{B_{i}^{\gamma}}= \lbrace x_{i},d_{i},b_{i},u_{i},n_{i},r_{i},p_{i},q_{i},z_{i},s_{i},f_{i},a_{i},t_{i} \rbrace$ be the copy of $\widehat{B_{i}^{\gamma}}$ in $T$, and let $\tilde{Q}_i$ be the copy of $Q_i$ in $T$. Let $\theta^{'}$ be the ordering of $A = \bigcup_{i=1}^{l}(V(\widetilde{B_{i}^{\gamma}})\cup V(\tilde{Q}_i))$ according to their appearance in $(S_{1},...,S_{\mid w \mid})$ (that is if $a,b \in A $ and $a \in S_{i}$, $b\in S_{j}$ with $i<j$ then $a$ precedes $b$ in $\theta^{'}$, and if $a\in S_{j}^{m},b\in S_{j}^{r}$ with $m<r$ then $a$ precedes $b$ in $\theta^{'}$). Let $ 1 \leq i \leq l$ such that $B_{i}^{\gamma}$ is a left $\gamma_{1}$$-$boat. If $u_{i} \leftarrow a_{i}$, then we remove $x_{i},d_{i},b_{i},n_{i},z_{i},f_{i}$ from $\theta^{'}$. Otherwise, if $x_{i} \leftarrow b_{i}$, then we remove $u_{i},n_{i},r_{i},p_{i},s_{i},a_{i}$ from $\theta^{'}$. Otherwise, if $n_{i} \leftarrow f_{i}$, then we remove $b_{i},u_{i},r_{i},p_{i},s_{i},a_{i}$ from $\theta^{'}$. Otherwise, $u_{i} \rightarrow a_{i}$, $x_{i} \rightarrow b_{i}$ and $n_{i} \rightarrow f_{i}$, in this case we remove $d_{i},r_{i},q_{i},z_{i},s_{i},t_{i}$ from $\theta^{'}$. Note that in the first three cases we obtain the cyclic ordering of the left $\gamma_{1}$-boat and in the last case we obtain the forest ordering of the left $\gamma_{1}$-boat. Let $ 1 \leq i \leq l$ such that $B_{i}^{\gamma}$ is a left $\gamma_{2}$-boat. If $u_{i} \leftarrow a_{i}$, then we remove $x_{i},d_{i},b_{i},n_{i},z_{i},t_{i}$ from $\theta^{'}$. Otherwise, if $x_{i} \leftarrow b_{i}$, then we remove $u_{i},n_{i},r_{i},p_{i},s_{i},a_{i}$ from $\theta^{'}$. Otherwise, if $n_{i} \leftarrow t_{i}$, then we remove $b_{i},u_{i},r_{i},p_{i},s_{i},a_{i}$ from $\theta^{'}$. Otherwise, $u_{i} \rightarrow a_{i}$, $x_{i} \rightarrow b_{i}$ and $n_{i} \rightarrow t_{i}$, in this case we remove $d_{i},r_{i},q_{i},z_{i},s_{i},f_{i}$ from $\theta^{'}$. Note that in the first three cases we obtain the cyclic ordering of the left $\gamma_{2}$-boat and in the last case we obtain the forest ordering of the left $\gamma_{2}$-boat. Let $ 1 \leq i \leq l$ such that $B_{i}^{\gamma}$ is a right $\gamma_{1}$-boat. If $d_{i} \leftarrow s_{i}$, then we remove $b_{i},n_{i},z_{i},f_{i},a_{i},t_{i}$ from $\theta^{'}$. Otherwise, if $b_{i} \leftarrow z_{i}$, then we remove $d_{i},u_{i},p_{i},q_{i},s_{i},f_{i}$ from $\theta^{'}$. Otherwise, if $f_{i} \leftarrow t_{i}$, then we remove $d_{i},u_{i},p_{i},q_{i},z_{i},s_{i}$ from $\theta^{'}$. Otherwise, $d_{i} \rightarrow s_{i}$, $b_{i} \rightarrow z_{i}$ and $f_{i} \rightarrow t_{i}$, in this case we remove $x_{i},u_{i},n_{i},r_{i},q_{i},a_{i}$ from $\theta^{'}$. Note that in the first three cases we obtain the cyclic ordering of the right $\gamma_{1}$$-$boat and in the last case we obtain the forest ordering of the right $\gamma_{1}$-boat. Let $ 1 \leq i \leq l$ such that $B_{i}^{\gamma}$ is a right $\gamma_{2}$-boat. If $d_{i} \leftarrow s_{i}$, then we remove $x_{i},n_{i},z_{i},f_{i},a_{i},t_{i}$ from $\theta^{'}$. Otherwise, if $x_{i} \leftarrow z_{i}$, then we remove $d_{i},u_{i},p_{i},q_{i},s_{i},f_{i}$ from $\theta^{'}$. Otherwise, if $f_{i} \leftarrow t_{i}$, then we remove $d_{i},u_{i},p_{i},q_{i},z_{i},s_{i}$ from $\theta^{'}$. Otherwise, $d_{i} \rightarrow s_{i}$, $x_{i} \rightarrow z_{i}$ and $f_{i} \rightarrow t_{i}$, in this case we remove $b_{i},u_{i},n_{i},r_{i},q_{i},a_{i}$ from $\theta^{'}$. Note that in the first three cases we obtain the cyclic ordering of the right $\gamma_{2}$-boat and in the last case we obtain the forest ordering of the right $\gamma_{2}$$-$boat. We apply this rule for all $ 1 \leq i \leq l$. We obtain one of the orderings in $\Theta_{\theta}(H)$. So $T$ contains $H$. $\blacksquare$ \vspace{2mm}\\   
\end{proof}
We are ready to prove Theorem \ref{generalflotilla-galaxy}:\vspace{2mm}\\
\begin{proof}
Let $H$ be a flotilla-galaxy under $\theta$. We may assume that $H$ is a regular flotilla-galaxy since every flotilla-galaxy is a subtournament of a regular  flotilla-galaxy. Let $B^{\gamma}_{1},...,B^{\gamma}_{l}$ be the $\gamma$-boats of $H$ under $\theta$, and let $Q_{1},...,Q_{l}$ be the frontier stars of $H$$\mid$$(V(H)\backslash \bigcup_{i=1}^{l}V(B^{\gamma}_{i}))$ under $\overline{\theta}$. Let $\epsilon > 0$ be small enough and let $\lambda > 0$ be small enough. Assume that $H$ does not satisfy $EHC$, then there exists an $H$-free $\epsilon$-critical tournament $T$. By Theorem \ref{i}, $T$ contains a smooth $(c,\lambda ,w)$-structure $(S_{1},...,S_{\mid w \mid})$ corresponding to $\widehat{H}$ under $(\widehat{H},\widehat{\theta})$ for some $c >0$ and appropriate $\lbrace 0,1 \rbrace$-vector $w$. Then by the previous corollary, $T$ contains $H$, a contradiction. $\blacksquare$
\end{proof}

\begin{theorem}
If $H$ is a subtournament of a flotilla-galaxy, then $H$ has the Erd\"{o}s-Hajnal property.
\end{theorem}
\begin{proof}
The result follows from Theorem \ref{generalflotilla-galaxy} and the fact that the Erd\"{o}s-Hajnal property is a hereditary property. $\blacksquare$ 
\end{proof}

\end{document}